\title{Tower sets and other configurations with the Cohen-Macaulay property }
\author{Giuseppe Favacchio, Alfio Ragusa
	\and  Giuseppe Zappal\`a}
\subjclass[2010]{13 H 10, 14 N 20, 13 D 40}
\keywords{Cohen-Macaulay, Monomial ideals, Configurations}
\DeclareSymbolFont{rsfscript}{OMS}{rsfs}{m}{n}
\DeclareSymbolFontAlphabet{\mathrsfs}{rsfscript}
\DeclareSymbolFont{AMSb}{U}{msb}{m}{n}
\DeclareSymbolFontAlphabet{\mathbb}{AMSb}
\DeclareSymbolFont{eufrak}{U}{euf}{m}{n}
\DeclareSymbolFontAlphabet{\gothic}{eufrak}
\newcommand{\f}{\footnotesize}
\newcommand{\id}{\operatorname{id}}
\newcommand{\pp}{\mathbb P}
\newcommand\depth{\operatorname{depth}}
\newcommand\rw{\Rightarrow}
\newcommand\prm{{\gothic p}}
\newcommand\zz{{\mathbb Z}}
\newcommand{\ff}{\mathcal F}
\newcommand{\mb}{\mathcal M}
\newcommand{\sd}{\mathcal S}
\newcommand{\zp}{\zz_{+}}
\newcommand{\zc}{\zz_+^c}
\newcommand{\zd}{(\zz_+^2)^*}
\newcommand{\pip}{\pi_1(T)\cap\pi_2(T)}
\newcommand\pd{\operatorname{proj-dim}}
\newcommand{\xu}{\underline{x}}
\newcommand{\hu}{\underline{h}}
\newtheorem{thm}{Theorem}[section]
\newtheorem{lem}[thm]{Lemma}
\newtheorem{prp}[thm]{Proposition}
\newtheorem{cor}[thm]{Corollary}
\theoremstyle{definition}
\newtheorem{dfn}[thm]{Definition}
\theoremstyle{remark}
\newtheorem{rem}[thm]{Remark}
\newtheorem{exm}[thm]{Example}
\newcounter{num}
\begin{document}

%\maketitle
%footnotemark
%\footnotetext[1]{2000 {\it Mathematics subject classification: }13 D 40, 13 H 10.}
%\subjclass{13 D 40, 13 H 10}

\begin{abstract}
Some well-known arithmetically Cohen-Macaulay configurations of linear varieties in $\pp^r$ as $k$-configurations, partial intersections and star configurations are generalized by introducing {\sl tower schemes}. Tower schemes are reduced schemes that are finite union of linear varieties whose support set is a suitable finite subset of $\zc$ called {\sl tower set}.
We prove that the tower schemes are arithmetically Cohen-Macaulay and we compute their Hilbert function in terms of their support.
%\par
Afterwards, since even in codimension $2$ not every arithmetically Cohen-Macaulay squarefree monomial ideal is the ideal of a tower scheme, we slightly extend this notion by defining {\sl generalized tower schemes}  (in codimension $2$) and we show that the support of these configurations (the {\sl generalized tower set}) gives a combinatorial characterization of the primary decomposition of the arithmetically Cohen-Macaulay squarefree monomial ideals.
\end{abstract}

%\maketitle
%footnotemark
%\footnotetext[1]{2000 {\it Mathematics subject classification: }13 D 40, 13 H 10.}
%\subjclass{13 D 40, 13 H 10}

%\begin{abstract}
%\end{abstract}

\maketitle

\section*{Introduction}
\markboth{\it Introduction}{\it Introduction}
In the last few years a large number  of researchers in algebraic
geometry  in order to produce projective schemes with suitable Hilbert functions
and graded Betti numbers constructed special configurations of linear varieties related to some subsets of $\zc.$ Among these should be cited the  partial intersection schemes  introduced first in \cite{MR} and generalized in any codimension in \cite{RZ1} and the $k$-configurations defined  in \cite{GS} and \cite{GHS} to obtain maximal graded Betti numbers with respect to a fixed Hilbert function. On the other hand, to study the extremal Hilbert functions for fat point schemes in the plane, secant varieties of some classical algebraic varieties and some properties of the symbolic powers of ideals, the star configurations were defined and deeply investigated (see for instance \cite{AS}, \cite{GHM}). All these configurations lead to aCM ideals, mostly  monomial and squarefree. Looking at what all these configurations have in common, in this paper we define the tower sets (Definition \ref{twse}), suitable finite subsets of  $\zc,$ on which are supported the tower schemes (Definition \ref{twsc}), which generalize all the previous mentioned configurations. 
These tower sets enclose the combinatorial aspects of such configurations.
\par
Also for these schemes we are able to prove that they have the aCM property (Theorem \ref{tower-aCM}). 
Moreover, we compute the Hilbert function of the tower schemes in terms of its tower set support.
At this point one can believe that, at least for monomial squarefree ideals, all aCM ideals can be constructed in this way. Unfortunately, already in codimension $2,$ this is false as we show  in Example \ref{ex}. So the question which arises is to find the right configuration which could characterize all the aCM monomial squarefree ideals in a polynomial ring. Here we give a complete answer in codimension $2$ (Theorems \ref{gtsacm}, \ref{um} and \ref{gts-acm}) defining a slight modification of the tower schemes (generalized tower sets and schemes, see Definitions \ref{tg} and \ref{gts}). The codimension bigger than $2$ case remains open. 

After preliminaries and basic facts, in section 2  we introduce tower sets and tower schemes and we prove that all these schemes are aCM (Theorem \ref{tower-aCM}). Then we show that every tower scheme has the same Hilbert function as a corresponding tower scheme supported on a left segment whose Hilbert function was computed in \cite{RZ1} (see Proposition \ref{LD} and Corollary \ref{HFtow}). Section 3 is devoted to give a combinatorial characterization for aCM squarefree monomial ideals of codimension 2. To do that we give a slight generalization of tower sets and tower schemes (Definitions \ref{tg} and \ref{gts}). Then we prove numerous preparatory results about these sets and schemes and finally in Theorems \ref{gtsacm} and \ref{gts-acm} we prove the stated characterization.

\section{Notation and preliminaries} 
\markboth{\it Notation and preliminaries}
{\it Notation and preliminaries}
Throughout the paper $k$ will be a field and $R:=k[x_1,\ldots,x_n]=\oplus_d R_d$ will be the standard graded polynomial $k$-algebra.
\par
We will denote by $\zp:=\{r\in\zz\mid r>0\}.$ If $r\in\zp$ we will set $[r]:=\{1,\ldots,r\}.$
If $c,r\in\zp$ we will denote by $C_{c,r}$ the set of the subsets of $[r]$ of cardinality $c.$
\par
Moreover, we will set $\pi_i:\zp^c\to \zp$ the projection on the $i$-th component.
On the set $\zc$ we will use the following standard partial order. If $\alpha,\beta\in\zc,$ $\alpha\le\beta$ iff $\pi_i(\alpha)\le\pi_i(\beta)$ for every $i\in [c].$
\par
We will denote by $(\zc)^*:=\{(a_1,\ldots,a_c)\in\zc\mid a_i\ne a_j \text{ for every }i\ne j\}.$
Let $T\subset\zc$ be a finite set. Let $1\le t\le c-1$ be an integer and let $\alpha\in\zp^t.$ We set
 $$T_{\alpha}:=\{\gamma\in\zp^{c-t}\mid (\gamma,\alpha)\in T\}$$
and 
 $$T^{\alpha}:=\{\gamma\in\zp^{c-t}\mid (\alpha,\gamma)\in T\}$$
\begin{dfn}
The function $\varphi:(\zc)^*\to C_{c,n}$ such that $\varphi(a_1,\ldots,a_c)=\{a_1,\ldots,a_c\},$ will be called {\em forgetful} function.
A function $\omega:C_{c,n}\to (\zc)^*$ will be called {\em ordinante} iff $\varphi\circ\omega=\id_{C_{c,n}}.$
\end{dfn}
Let $L\subset\zc$ be a finite set. $L$ is said {\em left segment} if for every $\alpha\in L$ and $\beta\in\zc$ with $\beta\le\alpha$ it follows that $\beta\in L.$
\par
Let $L\subset\zc$ be a left segment. The set $\{\alpha_1,\ldots,\alpha_r\}\subseteq L$ is called {\em set of generators} for $L$ if for every $\alpha\in L,$ $\alpha\le\alpha_i$ for some $i.$
The element $(\max\pi_1(L),\ldots,\max\pi_c(L))\in\zp^c$ is said the {\em size} of $L.$ 
\par
Let $L\subset\zc$ be a left segment of size $(m_1,\ldots,m_c),$ with $c<n.$ For $1\le i\le c,$ let $\ff_i=\{f_{i1},\ldots,f_{im_i}\}$ be $c$ families of generic linear forms belonging to $R.$ For every $\alpha=(a_1,\ldots,a_c)\in L$ we set $I_{\alpha}:=(f_{1a_1},\ldots,f_{ca_c}).$ We recall that the scheme defined by the ideal $I_L(\ff_1,\ldots,\ff_c):=\bigcap_{\alpha\in L}I_{\alpha}$ is called {\em partial intersection}, with support on $L$ and with respect to the families 
$\ff_1,\ldots,\ff_c.$
\par
If $\alpha\in\zc$ we set $v(\alpha):=\sum_{i=1}^c\pi_i(\alpha).$
If $L\subset\zc$ is a left segment, the Hilbert function of $L$ is 
$$H_L(i):=|\{\alpha\in L\mid v(\alpha)=i+c\}|.$$ 
We remind that $H_L$ coincides with the Hilbert function of a partial intersection supported on $L$ (for details about left segments and partial intersections see \cite{RZ1}).
\par
In the sequel, if $M$ is a matrix of rank $r,$ with entries in $R,$ we will denote by $I(M)$ the ideal generated by the minors of size $r$ in $M.$

\section{Towers sets} 
\markboth{\it Towers sets}{\it Towers sets}
Many recent papers dealt with special configurations of linear subvarieties of projective spaces which raised up to Cohen-Macaulay varieties,
for instance partial intersections studied in \cite{RZ1}, $k$-configurations studied in \cite{GHS}, star configurations studied in \cite{GHM}.
In this section we would like to generalize all these configurations in such a way to preserve the Cohen-Macaulayness.
\par
\begin{dfn}\label{twse}
%Note that for $t=0,$ $(\zz^+)^{0}=\{\emptyset\}$ and $T_{\emptyset}=T.$
%Let $2\le t\le c+1$ be an integer and $(u_{t},\ldots,u_c)\in(\zz^+)^{c-t+1}.$ We set
% $$T_{u_{t}, \ldots, u_c}:=\{(y_{1}, \ldots, y_{t-1})\in(\zz^+)^{t-1}\mid (y_{1},\ldots,y_{t-1},u_{t},\ldots,u_c)\in T\}.$$
%Note that for $t=c+1,$ $(u_{c+1},\ldots,u_c)=\emptyset$ and $T_{\emptyset}=T.$
Let $T\subset\zc$ be a finite set. 
We say that $T$ is a {\em tower set} if for every $t\in [c-1]$ and for every $\alpha,\beta\in\zp^{t},$ with $\alpha<\beta,$ $T_{\alpha}\ne\emptyset,$ we have 
$T_{\alpha}\supseteq T_{\beta}.$
% $$T_{h,u_{t}, \ldots, u_c} \supseteq T_{k,u_{t}, \ldots, u_c}\,\,\text{ whenever }h\le k \text{ and }T_{h,u_{t}, \ldots, u_c}\ne\emptyset.$$
 %$$h\le k,\,h,k\in\pi_{t-1}(T).$$
\end{dfn}
Note that when $c=1$ every finite subset of $\zp$ is a tower set.
\begin{rem}
If $T\subset\zc$ is a tower set and $\alpha\in\zp^t$ then $T_{\alpha}\subset\zp^{c-t}$ is also a tower set.
\end{rem}

\begin{dfn}\label{twsc}
Let $T\subset\zp^c$ be a tower set. Let $R:=k[x_1,\ldots,x_n],$ with $2 \le c\le n-1.$ Let $\ff_i=\{f_{ij}\mid j\in\pi_i(T)\},$ $1\le i\le c,$ where $f_{ij}\in R_{d_{ij}},$ such that 
$f_{ij}$ and $f_{ih}$ are coprime when %$j,h\in\pi_i(T),$ 
$j\ne h$ and for every $\alpha=(a_1,\ldots,a_c)\in T$ the sequence $(f_{1a_1},\ldots,f_{ca_c})$ is regular. We will denote by $I_{\alpha}$ the complete intersection ideal generated by $f_{1a_1},\ldots,f_{ca_c}.$ We set
 $$I_T(\ff_1,\ldots,\ff_c):=\bigcap_{\alpha\in T}I_{\alpha}.$$
It defines a $c$-codimensional subscheme of $\pp^n$ called {\em tower scheme}, with support on $T,$ with respect to the families $\ff_1,\ldots,\ff_c.$
\end{dfn}

Note that if $T$ is a $c$-left segment then $T$ is a tower set, so every partial intersection is a tower scheme.
\par
Recently many people investigated special subschemes called star configurations. We recall that a star configuration is defined as follows. Let $R:=k[x_1,\ldots,x_n],$ $s,c\in\zp$ such that $c\le\min\{s,n-1\}.$ Take a set $\ff$ consisting of $s$ forms $f_1,\ldots,f_s\in R$ such that any $c$ of them are a regular sequence. If $s\ge a_1>\ldots>a_c\ge 1$ are integers and $\alpha=\{a_1,\ldots,a_c\}$ we set $I_{\alpha}:=(f_{a_1},\ldots,f_{a_c}).$ A star configuration is the subscheme $V_c(\ff,\pp^n)\subset\pp^n$ defined by the ideal $\bigcap I_{\alpha}$ where $\alpha$ runs over all the subsets of $[s]$ of cardinality $c.$ For more details on star configurations see, for instance, \cite{GHM}.
\begin{rem}
A star configuration is a particular tower scheme. Namely, let
 $$T=\{(a_1,\ldots,a_c)\in\zp^c\mid s\ge a_1>\ldots>a_c\ge 1\},$$
and let us consider the families of forms
 $$\ff_i=(f_{s-i+1},\ldots,f_{c-i+1}),\,\,\,\text{ for }1\le i\le c.$$
$T$ is trivially a tower set and $V(I_T(\ff_1,\ldots,\ff_c))=V_c(\ff,\pp^n).$
\end{rem}

In the quoted papers it was shown that partial intersections, $k$-configurations and star configurations are all aCM schemes.
Now we prove that every tower scheme is an aCM scheme and this generalizes those results. We need the following lemma, which is
a slight generalization of Lemma 1.6 in \cite{RZ1}.

\begin{lem}\label{l1}
Let $c,r\ge 2$ be integers. Let $V_1 \supseteq \ldots \supseteq V_r$ be
$(c-1)$-codimensional aCM subschemes of
${\pp}^n$  and $A_j=V(f_j)$ hypersurfaces, $1 \le j \le r,$  with $\deg f_j=d_j.$ 
We set $Y_i:=V_i \cap A_i$ and let us suppose that $Y_i$
 is $c$-codimensional for each $i$ and that $Y_i$ and $Y_j$
 have no common components for $i \ne j.$ We set $d :=\sum_{i=1}^r d_i,$ $Y:=Y_1\cup \ldots
 \cup Y_{r-1}$ and $X:=Y \cup Y_r.$ Then the following
sequence of graded $R$-modules
$$ 0 \to   I_{Y_r}(-(d-d_r))  \stackrel{f}{\rightarrow}  I_X  \stackrel{\varphi}{\rightarrow} I_Y/(f)  \to   0 $$
is exact, where $f=\prod \limits_{i=1}^{r-1}f_i$ 
and $\varphi$ is the natural map. Moreover
$$I_X=I_{V_1}+ f_1I_{V_2}+ f_1f_2I_{V_3}+ \cdots +
f_1\ldots f_{r-1}I_{V_r}+(f_1 \ldots f_r).$$
\end{lem}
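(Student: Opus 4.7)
The plan is to argue by induction on $r$, establishing first the closed-form expression for $I_X$ and then deriving the short exact sequence from it. The base case $r=1$ reduces to $I_{Y_1}=I_{V_1}+(f_1)$, which holds because $V_1$ is aCM of codimension $c-1$ and $f_1$ is a non-zerodivisor on $R/I_{V_1}$, so the ideal $I_{V_1}+(f_1)$ is already saturated.

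For the inductive step, I would assume the formula holds for $Y=Y_1\cup\cdots\cup Y_{r-1}$. The containment $\supseteq$ for $I_X$ is immediate: each summand $f_1\cdots f_{i-1}I_{V_i}$ lies in $I_{Y_j}$ for every $j$ (for $j<i$ because $f_j\in I_{Y_j}$; for $j\ge i$ because the nesting $V_j\subseteq V_i$ yields $I_{V_i}\subseteq I_{V_j}\subseteq I_{Y_j}$), so it sits inside $I_X=I_Y\cap I_{Y_r}$, and $(f_1\cdots f_r)$ is trivially in $I_X$. For $\subseteq$, take $g\in I_X$ and apply the inductive formula to $g\in I_Y$ to write
\[g=\sum_{i=1}^{r-1}f_1\cdots f_{i-1}\,g_i+f_1\cdots f_{r-1}\,h,\qquad g_i\in I_{V_i},\ h\in R.\]
Since each $g_i\in I_{V_i}\subseteq I_{V_r}\subseteq I_{Y_r}$ and $g\in I_{Y_r}$, the residual $f_1\cdots f_{r-1}\,h$ lies in $I_{Y_r}$. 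The whole induction then hinges on the claim that $f_1\cdots f_{r-1}$ is a non-zerodivisor modulo $I_{Y_r}$: granting this, $h\in I_{Y_r}=I_{V_r}+(f_r)$, and writing $h=g_r+f_rk$ with $g_r\in I_{V_r}$ produces the desired decomposition of $g$.

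The hard part is this regularity claim. Since $V_r$ is aCM and $f_r$ is regular modulo $I_{V_r}$, the quotient $R/I_{Y_r}$ is Cohen-Macaulay and every associated prime $\prm$ of $I_{Y_r}$ is minimal of codimension $c$. If some $f_i$ with $i<r$ belonged to such a $\prm$, the corresponding component of $Y_r$ would be forced into $A_i\cap V_r\subseteq A_i\cap V_i=Y_i$; as $Y_i$ is equidimensional of codimension $c$, this component would then be a component of $Y_i$ as well, contradicting the no-common-components hypothesis. So $f_1\cdots f_{r-1}$ avoids every associated prime of $I_{Y_r}$.

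With the explicit formula in hand, the short exact sequence is formal. Setting $f=f_1\cdots f_{r-1}$, I would define $\psi:I_{Y_r}(-(d-d_r))\to I_X$ by $\psi(g)=fg$ and let $\varphi$ be the natural inclusion followed by projection. Injectivity of $\psi$ is clear since $R$ is a domain; surjectivity of $\varphi$ is the identity $I_X+(f)=I_Y$, which drops out immediately on comparing the two closed-form expressions. Exactness in the middle reduces to $\ker\varphi=f\cdot I_{Y_r}$, whose nontrivial inclusion is one more instance of the regularity fact above. Thus the whole argument rests on that single non-zerodivisor statement, with everything else being formal bookkeeping.
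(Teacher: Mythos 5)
Your argument is correct and uses exactly the same ingredients as the paper's proof --- the inductive closed formula for $I_Y$, the inclusions $I_{V_i}\subseteq I_{V_r}$ coming from the nesting, the identity $I_{Y_r}=I_{V_r}+(f_r)$ from Cohen--Macaulayness, and the regularity of $f=f_1\cdots f_{r-1}$ modulo $I_{Y_r}$ --- the only structural difference being that you establish the formula for $I_X$ first and then derive the exact sequence formally, whereas the paper proves surjectivity of $\varphi$ first (its key step, that $x\in I_{V_1}+f_1I_{V_2}+\cdots$ forces $x\in I_{V_r}$ and hence $x\in I_X$, is the same computation as your step showing $f_1\cdots f_{r-1}h\in I_{Y_r}$) and reads the formula off the resulting exact sequence. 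A minor bonus of your write-up is that you actually justify, via associated primes and the unmixedness of $I_{Y_r}$, the non-zerodivisor claim that the paper only asserts in one clause.
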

\begin{proof}
The proof is analogous to that of Lemma 1.6 in \cite{RZ1}.
We report it for convenience of the reader.
\par
Observe that the exactness of the above sequence in the middle
depends on the fact that $f$ is regular modulo $I_{Y_r},$
since $Y_i$ and $Y_j$ have no common components for $i \ne j.$
So, the only not trivial fact to prove is  that the map $\varphi$ is
surjective. For this we use induction on $r.$ For $r=2,$ since $V_1$ is aCM,
$I_Y=I_{V_1} + (f_1),$ therefore every element
in $I_Y/(f_1)$ looks like $z+(f_1)$ with $z \in I_{V_1} \subseteq I_{V_2}.$
Hence, $z \in I_{Y_1} \cap I_{Y_2}=I_X.$ So, $\varphi$ is surjective and
the sequence is exact. Now, from the exactness of this sequence it
follows that $I_X$ is generated by  $I_{V_1}$  and  $f_1I_{Y_2}$,
i.e. $I_X=I_{V_1}+f_1I_{V_2}+(f_1f_2).$ \par
Let us suppose the lemma true for $r-1.$ This means, in particular,
that $I_Y=I_{V_1}+f_1I_{V_2}+ \cdots + f_1 \ldots f_{r-2}I_{V_{r-1}}+
(f_1 \ldots f_{r-1}).$ Therefore, every element $z\in I_{Y}/(f_1 \ldots
f_{r-1})$ has the form $x+(f_1 \ldots f_{r-1})$ with $x \in
I_{V_1}+f_1I_{V_2}+ \cdots + f_1 \ldots f_{r-2}I_{V_{r-1}}.$ Hence,
$x \in I_{V_r}$ which implies $x \in I_Y \cap I_{Y_r}=I_X.$
Again, by the exactness of our sequence we get that $I_X$ is
generated by $f_1\ldots f_{r-1}I_{Y_r}$ and by
$I_{V_1}+f_1I_{V_2}+ \cdots + f_1 \ldots f_{r-2}I_{V_{r-1}}$, i.e.
$I_X=I_{V_1}+f_1I_{V_2}+ \cdots + f_1 \ldots f_{r-1}I_{V_{r}}+
(f_1 \ldots f_r).$
\end{proof}

%\begin{cor}\label{hlem}
%With the terminology of Lemma \ref{l1}, 
%\end{cor}

We are ready to prove our result.

\begin{thm}\label{tower-aCM}
Every tower scheme is aCM.
\end{thm}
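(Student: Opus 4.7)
The plan is a double induction: outer on the codimension $c$ with trivial base $c=1$ (a tower scheme is then the hypersurface $V(\prod_{a\in T}f_{1a})$, which is aCM), and inner on $r:=|\pi_c(T)|$, the number of values taken by the last coordinate on $T$. Assume the theorem for codimension $c-1$ and proceed by induction on $r$.

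Write $\pi_c(T)=\{b_1<\cdots<b_r\}$. For each $j$, let $V_j$ be the $(c-1)$-codimensional tower scheme on the slice $T_{b_j}$ with respect to $\ff_1,\ldots,\ff_{c-1}$ (the regular-sequence hypothesis on $T$ restricts correctly). The tower condition with $t=1$ gives $T_{b_1}\supseteq\cdots\supseteq T_{b_r}$, hence $V_1\supseteq V_2\supseteq\cdots\supseteq V_r$; by the outer IH each $V_j$ is aCM of codimension $c-1$. Set $Y_j:=V_j\cap V(f_{cb_j})$. Since $V_j$ is aCM (hence unmixed), its associated primes lie among the $I_\gamma$ for $\gamma\in T_{b_j}$, and the regular-sequence hypothesis prevents $f_{cb_j}$ from lying in any such prime; so $f_{cb_j}$ is a nonzerodivisor on $R/I_{V_j}$. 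Consequently each $Y_j$ is aCM of codimension $c$, and distinct $Y_j$ share no common components since the $f_{cb_j}$ are pairwise coprime.

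The inner base $r=1$ is then immediate, as $V(I_T)=Y_1$. For $r\ge 2$, set $X:=V(I_T)$ and $Y:=Y_1\cup\cdots\cup Y_{r-1}$. A direct check shows that $Y$ is the tower scheme on $T':=T\setminus(T_{b_r}\times\{b_r\})$, which is again a tower set with $|\pi_c(T')|=r-1$, so by the inner IH $Y$ is aCM of codimension $c$. All hypotheses of Lemma \ref{l1} now hold, yielding the short exact sequence
$$0 \to I_{Y_r}(-(D-d_r))\to I_X \to I_Y/(F)\to 0,$$
with $F=f_{cb_1}\cdots f_{cb_{r-1}}$. Combining this with the auxiliary sequence $0\to I_Y/(F)\to R/(F)\to R/I_Y\to 0$ and applying the depth lemma (using $\depth R/I_{Y_r}=\depth R/I_Y=n-c$ and $\depth R/(F)=n-1$) yields $\depth R/I_X\ge n-c=\dim R/I_X$, so $X$ is aCM.

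The main delicacy is the bookkeeping of the two inductions: one must repeatedly verify that the slices $T_{b_j}$ and the remainder $T'$ remain tower sets, that the restricted families still satisfy the regular-sequence condition, and that the expected codimensions of $V_j$ and $Y_j$ do not drop — all consequences of the nesting of slices forced by the tower property. Once this structural checking is done, the aCM conclusion is a routine application of Lemma \ref{l1} together with the depth lemma.
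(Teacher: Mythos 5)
Your proposal is correct and follows essentially the same route as the paper: double induction (outer on the codimension $c$, inner on $|\pi_c(T)|$), slicing $T$ along the last coordinate into a nested chain of $(c-1)$-codimensional tower schemes, and applying Lemma \ref{l1} to the resulting decomposition $X=Y\cup Y_r$. The only cosmetic difference is in the last step, where you conclude via the depth lemma while the paper bounds the length of a free resolution of $I_X$ by the horseshoe construction from the same exact sequence; these are equivalent by Auslander--Buchsbaum, and your explicit verification that $f_{cb_j}$ is a nonzerodivisor modulo $I_{V_j}$ is a welcome expansion of a point the paper leaves implicit.
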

\begin{proof}
Let $X$ be a tower scheme of codimension $c.$
To show that $X$ is aCM we use induction on $c.$ The property is
trivially true for $c=1,$ so we can assume that every tower scheme 
of codimension $c-1$ is aCM. Let $T$ be the support of $X$ and let $\ff_i=\{f_{ij}\mid j\in\pi_i(T)\},$ for $1\le i \le c,$ be the families defining $X.$
Let $\pi_c(T)=\{a_1,\ldots,a_s\},$ with $a_1<\ldots<a_s.$
For every $i\in [s]$ we denote by $V_i$ the tower scheme of codimension $c-1$ supported
on $T_{a_i}.$ If $i<j$ then $V_i\supseteq V_j.$ By inductive hypotheses each $V_i$ is aCM. Moreover we denote by $A_i$ the hypersurface defined by $f_{ca_i},$ with $i\in [s]$ and $Y_i=V_i\cap A_i.$ Note that by the hypotheses on $\ff_h$'s $Y_i$ is aCM of codimension $c.$
%Let $\pi_c(T)=\{a_1,\ldots,a_s\},$ with $a_1<\ldots<a_s.$
\par
Therefore we have $X=\bigcup \limits_{1\le i\le s}Y_i.$ 
Now we use induction on $s.$ For $s=1$ $X=Y_1$ is aCM. Suppose that  $Y=
\bigcup \limits_{1 \le i \le {s-1}}Y_i$ is aCM and show that
$X=Y \cup Y_s$ is aCM. Applying the previous lemma we get
the exact sequence
$$0 \to  I_{Y_s}(-\deg f)  \to I_X \to I_Y/(f) \to   0 $$
where $f=\prod \limits_{i=1}^{s-1}f_{ca_i}$
from which we see that a resolution of $I_X$ can be obtained as direct
sum of the resolutions of $ I_{Y_s}(-\deg f)$ and
$I_Y/(f)$; since both have resolutions of  length $c$ the same is true
for $I_X$ and we are done.
\end{proof}

Our next aim is to compute the Hilbert function of a tower scheme in the case when the defining families consist of linear forms. To do this, if $T$ is a tower set, we define a map $\sigma: T\to\zc$ as follows.  Let $\alpha=(a_1,\ldots,a_c)\in T,$ we set
 $$h_1(\alpha):=\mid\{i\mid i\le a_{1},\,(i,a_{2},\ldots,a_{c})\in T\}\mid,$$
 $$h_j(\alpha):=\mid\{i\mid i\le a_{j},\,T_{(i,a_{j+1},\ldots,a_{c})}\ne\emptyset\}\mid,\,\,\text{for}\,\,2\le j\le c$$ 
 and finally
 $$\sigma(a_1,\ldots,a_c):=(h_1(\alpha),\ldots,h_c(\alpha)).$$
The map $\sigma$ is trivially injective. We set $T^{\#}:=\sigma(T).$
\begin{prp}\label{towleft}
For every tower set $T,$ $T^{\#}$ is a left segment.
\end{prp}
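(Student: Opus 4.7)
The plan is to induct on $c$. The case $c=1$ is immediate: a tower set is any finite $\{a_1<\cdots<a_r\}\subset\zp$, so $\sigma(a_k)=k$ and $T^{\#}=[r]$, which is a left segment.

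For $c\ge 2$ I stratify $T$ by the last coordinate. Set $J:=\{j\in\zp:T_{(j)}\ne\emptyset\}$ and list its elements in increasing order $j_1<\cdots<j_m$. Each $T_{(j_k)}$ is a tower set in $\zp^{c-1}$ (by the remark after Definition~\ref{twse}), so by induction $(T_{(j_k)})^{\#}$ is a left segment in $\zp^{c-1}$. A direct verification from the definitions of $h_1,\ldots,h_c$ gives, for every $(a_1,\ldots,a_{c-1},j_k)\in T$,
\[
\sigma(a_1,\ldots,a_{c-1},j_k)=\bigl(\sigma_{T_{(j_k)}}(a_1,\ldots,a_{c-1}),\,k\bigr),
\]
since $h_c=|J\cap[j_k]|=k$ and the remaining entries are exactly the $h_i$-counts computed inside $T_{(j_k)}$. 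Consequently
\[
T^{\#}=\bigsqcup_{k=1}^{m}(T_{(j_k)})^{\#}\times\{k\}.
\]

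The crucial step is the nesting $(T_{(j_k)})^{\#}\subseteq(T_{(j_{k'})})^{\#}$ whenever $k'\le k$. The tower axiom at $t=1$, applied to $j_{k'}<j_k$ (with $T_{(j_{k'})}\ne\emptyset$), gives $T_{(j_{k'})}\supseteq T_{(j_k)}$. For any $\alpha'\in T_{(j_k)}\subseteq T_{(j_{k'})}$, inspecting the defining $h_i$-counts coordinate by coordinate shows that a larger ambient tower set can only produce larger counts, so $\sigma_{T_{(j_{k'})}}(\alpha')\ge \sigma_{T_{(j_k)}}(\alpha')$. Since $(T_{(j_{k'})})^{\#}$ is a left segment by induction and contains the larger point $\sigma_{T_{(j_{k'})}}(\alpha')$, it also contains the smaller point $\sigma_{T_{(j_k)}}(\alpha')$, which establishes the nesting.

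To finish, take $y=(y_1,\ldots,y_c)\in T^{\#}$ with $y_c=k$ and $x\le y$ with $x_c=:k'\in[k]$. The decomposition places $(y_1,\ldots,y_{c-1})\in (T_{(j_k)})^{\#}$; the nesting then puts it in $(T_{(j_{k'})})^{\#}$, and the left-segment property at level $c-1$ gives $(x_1,\ldots,x_{c-1})\in(T_{(j_{k'})})^{\#}$. Hence $x\in(T_{(j_{k'})})^{\#}\times\{k'\}\subseteq T^{\#}$. The only non-trivial point in the argument, and the main obstacle, is precisely this nesting: the map $\sigma$ depends on the ambient tower set, so the same prefix $\alpha'$ has different images in $T_{(j_k)}$ and in $T_{(j_{k'})}$; recognizing that these images are monotonically comparable, and combining this with the inductive left-segment property, is what makes the stratification argument close.
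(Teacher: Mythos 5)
Your proof is correct: the identity $\sigma(a_1,\ldots,a_{c-1},j_k)=(\sigma_{T_{(j_k)}}(a_1,\ldots,a_{c-1}),k)$ does follow from unwinding the definitions of the $h_j$, the monotonicity of each $h_j$ under enlarging the ambient tower set is genuine, and the final assembly is sound. The route, however, differs from the paper's. The paper argues directly: given $\alpha'\in T^{\#}$ and $\beta'\le\alpha'$, it constructs a preimage $\beta\in T$ with $\sigma(\beta)=\beta'$ by choosing the coordinates $b_c,b_{c-1},\ldots,b_1$ in descending order, at each stage taking $b_j$ to be the $b_j'$-th element of $\{i\mid T_{(i,b_{j+1},\ldots,b_c)}\ne\emptyset\}$ and using the tower property to guarantee this set is at least as large as the corresponding set over $(a_{j+1},\ldots,a_c)$. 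You instead induct on $c$ via the stratification $T^{\#}=\bigsqcup_k (T_{(j_k)})^{\#}\times\{k\}$ together with the nesting $(T_{(j_k)})^{\#}\subseteq(T_{(j_{k'})})^{\#}$ for $k'\le k$. Both proofs exploit exactly the same structural fact (the slices $T_{(j)}$ are nested, so the counting function $\sigma$ only grows when the slice grows), but your nesting step is in substance a proof of the paper's Remark~\ref{chaintw} ($T\subseteq U$ tower sets implies $T^{\#}\subseteq U^{\#}$), which the paper states without proof immediately after this Proposition; so your argument has the side benefit of justifying that remark, at the cost of being slightly less self-contained (it leans on the inductive left-segment hypothesis twice, once for the nesting and once for the final descent), whereas the paper's construction is a single explicit computation valid for all $c$ at once.
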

\begin{proof}
Let $\alpha'=(a'_1,\ldots,a'_c)\in T^{\#}$ and $\beta'=(b'_1,\ldots,b'_c)\in\zc,$ such that $\beta'\le\alpha'.$ We have to prove that $\beta'\in T^{\#},$ i.e. we have to find $\beta\in T$ such that $\sigma(\beta)=\beta'.$ Let $\alpha=(a_1,\ldots,a_c)\in T$ be such that $\sigma(\alpha)=\alpha',$ hence $a'_i=h_i(\alpha).$ Since $b'_c\le h_c(\alpha),$ there is a unique element $b_c$ such that $T_{b_c}\ne\emptyset$ and $\mid\{i\mid i\le b_c,\, T_{i}\ne\emptyset\}\mid=b'_c.$ 
Now, since $b'_{c-1}\le h_{c-1}(\alpha),$ we have that $T_{(b'_{c-1},a_c)}\ne\emptyset$ and, since $T$ is a tower set, $T_{(b'_{c-1},b_c)}\ne\emptyset,$ therefore there is a unique element $b_{c-1}$ such that $T_{(b_{c-1},b_c)}\ne\emptyset$ and $\mid\{i\mid i\le b_{c-1},\, T_{(i,b_c)}\ne\emptyset\}\mid=b'_{c-1}.$ 
By iterating the same argument we will set $b_j$ the unique element such that $T_{(b_j,b_{j+1},\ldots,b_c)}\ne\emptyset$ and
$\mid\{i\mid i\le b_{j},\, T_{(i,b_{j+1},\ldots,b_c)}\ne\emptyset\}\mid=b'_j,$ for $1\le j\le c.$ Now we set $\beta=(b_1,\ldots,b_c).$ By definition $\beta\in T$ and $\sigma(\beta)=\beta'.$
\end{proof}

\begin{rem}\label{chaintw}
Note that if $T,U\subset\zc$ are tower sets such that $T\subseteq U$ then $T^{\#}\subseteq U^{\#}.$
\end{rem}

\begin{prp}\label{hftower}
Let $T\subset\zc$ be a tower set. Let $X=V(I_T(\ff_1,\ldots,\ff_c)).$ Let $Y$ be a tower scheme supported on $T^{\#},$ with respect to the same families $\ff_1,\ldots,\ff_c.$ Then $H_X=H_Y.$
\end{prp}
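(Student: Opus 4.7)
The plan is to prove the equality of Hilbert functions by induction on the codimension $c$ and, within each codimension, on $|T|$, mirroring the recursive structure used in the proof of Theorem \ref{tower-aCM}. The base case $c=1$ is immediate: every finite $T \subset \zp$ has $T^{\#} = \{1, \ldots, |T|\}$, so in the linear setting both $X$ and $Y$ are unions of $|T|$ generic hyperplanes with identical Hilbert functions.

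A preliminary observation is that $\sigma$ interacts cleanly with slicing along the last coordinate. For $\alpha = (a_1, \ldots, a_{c-1}, a_c) \in T$, direct inspection of the definition of $\sigma$ gives
$$\sigma_T(\alpha) = \bigl(\sigma_{T_{a_c}}(a_1, \ldots, a_{c-1}),\, h_c^T(\alpha)\bigr),$$
because each $h_j^T(\alpha)$ for $j < c$ depends only on the fiber $T_{a_c}$, while $h_c^T(\alpha)$ is the rank of $a_c$ in $\pi_c(T)$. Writing $\pi_c(T) = \{a_1 < \cdots < a_s\}$, this identity yields two key decompositions: \emph{(i)} $(T^{\#})_i = (T_{a_i})^{\#}$ for each $i \in [s]$; \emph{(ii)} setting $T' := T \setminus (T_{a_s} \times \{a_s\})$, we have $(T')^{\#} = T^{\#} \setminus ((T_{a_s})^{\#} \times \{s\})$. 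In particular, peeling off the top fiber of $T$ corresponds under $\sigma$ to peeling off the top fiber of $T^{\#}$.

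With these identities in hand, the inductive step runs as follows. Let $X'$ be the codimension-$c$ tower scheme on $T'$ and $V$ the codimension-$(c-1)$ tower scheme on $T_{a_s}$. Applying Lemma \ref{l1} iteratively (as in the proof of Theorem \ref{tower-aCM}) produces the short exact sequence
$$0 \to I_{V \cap A}(-(s-1)) \to I_X \to I_{X'}/(f) \to 0,$$
where $A = V(f_{c, a_s})$ and $f = \prod_{i < s} f_{c, a_i}$ has degree $s-1$. Taking Hilbert functions, and using that $f_{c, a_s}$ is a non-zero-divisor modulo $I_V$, this unwinds to
$$H_X(t) = H_{X'}(t) + H_V(t - (s-1)) - H_V(t - s).$$
By \emph{(i)}--\emph{(ii)}, the very same construction applies to $Y$ on $T^{\#}$, yielding
$$H_Y(t) = H_{Y'}(t) + H_{V^{\#}}(t - (s-1)) - H_{V^{\#}}(t - s),$$
where $Y'$ is the codim-$c$ tower scheme on $(T')^{\#}$ and $V^{\#}$ is the codim-$(c-1)$ tower scheme on $(T_{a_s})^{\#}$. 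The outer induction hypothesis on $c$ gives $H_V = H_{V^{\#}}$, and the inner induction hypothesis on $|T|$ gives $H_{X'} = H_{Y'}$; combining these, $H_X = H_Y$.

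The main obstacle is establishing the combinatorial identities \emph{(i)} and \emph{(ii)}; once these are secured, the proof reduces to a routine manipulation of the exact sequence from Lemma \ref{l1} and careful bookkeeping of degree shifts. The identities themselves amount to a direct but somewhat delicate unwinding of the definition of $\sigma$, complicated by the fact that the first $c-1$ coordinates of $\sigma_T(\alpha)$ genuinely depend on the last coordinate of $\alpha$ through the chosen fiber $T_{a_c}$.
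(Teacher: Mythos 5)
Your proof is correct and follows essentially the same route as the paper's: induction on $c$, slicing $T$ along the last coordinate into the nested fibers $T_{a_i}$, and an inner induction that peels off the top fiber via the exact sequence of Lemma \ref{l1}, with the codimension-$(c-1)$ hypothesis applied to the fiber schemes. The only substantive differences are that you make explicit the compatibility identities $\sigma_T(\alpha)=\bigl(\sigma_{T_{a_c}}(a_1,\ldots,a_{c-1}),h_c(\alpha)\bigr)$ and $(T^{\#})_i=(T_{a_i})^{\#}$, which the paper uses silently, and that you write the degree shift as $s-1$ (linear forms) where the paper keeps the general $\delta=\deg(f_1\cdots f_{s-1})$ needed for Corollary \ref{HFtow}; replacing $s-1$ by $\deg f$ throughout fixes this without any change to the argument.
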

\begin{proof}
%Since $X$ and $Y$ are aCM schemes, of the same codimension $c,$ it is enough to prove the assertion for their Artinian reductions. 
%So we can suppose that $R=k[x_1,\ldots,x_c]$ and $R/I_X$ and $R/I_Y$ are Artinian rings. 
%We proceed by induction on $c.$
%\par
If $c=1$ then $T$ is a finite subset of $\zz,$  say $r=|T|,$ so $T^{\#}=[r].$ Therefore $I_X$ and $I_Y$ are principal ideals generated by a form of same degree, hence $H_X=H_Y.$
\par
So we may assume that $c\ge 2$ and we proceed by induction on $c.$
We consider the set $\pi_c(T)=\{m_1,\ldots,m_s\},$ $m_1<\ldots<m_s.$ Since $T$ is a tower set, $T_{m_1}\supseteq\ldots\supseteq T_{m_s}$ are $(c-1)$-tower sets. Let $X_i$ be the scheme defined by $I_{T_{m_i}}(\ff_1,\ldots,\ff_{c-1}).$ Then each $X_i$ is an aCM scheme of codimension $c-1$ by Theorem \ref{tower-aCM} and $X_1\supseteq\ldots\supseteq X_s.$ Moreover, by Remark \ref{chaintw}, 
$(T_{m_1})^{\#}\supseteq\ldots\supseteq (T_{m_s})^{\#}.$ Let $Y_i$ be the scheme defined by $I_{(T_{m_i})^{\#}}(\ff_1,\ldots,\ff_{c-1}).$ By the inductive hypothesis $H_{X_i}=H_{Y_i}.$ Now, let $\ff_c=\{f_1,\ldots,f_s\},$ we set $\overline{X}_i:=X_i\cap V(f_i)$ and $\overline{Y}_i=Y_i\cap V(f_i).$ Since $X_i$ and $Y_i$ are aCM then $H_{\overline{X}_i}=H_{\overline{Y}_i}.$ Finally, using induction on $s$ and the exact sequences (see Lemma \ref{l1})
  $$ 0 \to   I_{\overline{X}_s}(-\delta)  \to  I_X  \to I_{\overline{X}_i\cup\ldots\cup\overline{X}_{s-1}}/(f_1\ldots f_{s-1})  \to   0 $$
  $$ 0 \to   I_{\overline{Y}_s}(-\delta)  \to  I_Y  \to I_{\overline{Y}_i\cup\ldots\cup\overline{Y}_{s-1}}/(f_1\ldots f_{s-1})  \to   0 $$
where $\delta=\deg(f_1\ldots f_{s-1}),$ we get the conclusion.
\end{proof}

Proposition~\ref{hftower} allows us to find a formula for the Hilbert function of a tower scheme.

\begin{rem}\label{dip}
Note that, according to the exact sequence of Lemma \ref{l1}, the Hilbert function of a tower scheme depends on the tower set and on the degrees of the forms in the families.
\end{rem}

Now we associate to a tower scheme $X,$ supported on a left segment $L,$ a partial intersection $Y$ with support on a suitable left segment $L_D$ such that $H_X=H_Y.$ 

If $L$  is a left segment of size $(a_1,\ldots,a_c)$ and $D=\{d_{ij}\},$ $1\le i\le c$ and $1\le j\le a_i$ are positive integers, we define a new left segment,  which we will be denoted by $L_D,$ in the following way. If $L$ is (minimally) generated by $K_1,\ldots,K_r,$ then $L_D$ is the left segment generated by $K'_1,\ldots,K'_r$ where, if $K_i=(k_1,\ldots,k_c)$ then $K'_i=(\sum_{j=1}^{k_1}d_{1j},\ldots,\sum_{j=1}^{k_c}d_{cj}).$

Thus, let $X$ be a tower scheme supported on the left segment $L$ and let $\ff_i=\{f_{ij}\mid j\in\pi_i(L)\},$ for $1\le i \le c,$ be the families defining $X.$ Set now 
$\pi_c(L)=\{a_1,\ldots,a_c\},$ with $a_1<\ldots<a_c$ and $D=\{d_{ij}\}$ where $d_{ij}=\deg f_{ij},$ $1\le i\le c$ and $1\le j\le a_i.$ Since, by Remark \ref{dip},  $H_X$ depends just on $D$ we may assume that $f_{ij}=\prod_{h=1}^{d_{ij}}l_{ij}^h$, where each $l_{ij}^h$ is a linear form. Now we denote by $Y$ the partial intersection supported on $L_D$ with respect the $c$ ordered families of linear forms  %$\mathcal{L}_i=\{l_{ij}^h \ | \ 1\le j\le a_i, 1\le h \le d_{ij}\}.$ 
$$\mathcal{L}_i=(l_{i11},\ldots,l_{i1d_{i1}},l_{i21},\ldots,l_{i2d_{i2}},\ldots,l_{ia_i1},\ldots,l_{ia_id_{ia_i}}).$$

\begin{prp}\label{LD}
Given a tower scheme $X$ supported on the left segment $L$ with respect to the families of forms  $\ff_i=\{f_{ij}\mid j\in\pi_i(L)\},$ for $1\le i \le c$  and $D=\{d_{ij}\}$ where $d_{ij}=\deg f_{ij}.$ 
%Let $Y$ be the partial intesection supported on $L_D$ as defined above. 
Then $H_X=H_{L_D}.$ 
\end{prp}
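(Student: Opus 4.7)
The plan is to upgrade the Hilbert-function comparison into a scheme-theoretic identification $X = Y$, after which $H_Y = H_{L_D}$ is the standard fact recalled in Section~1 about the Hilbert function of a partial intersection equalling that of its supporting left segment. By Remark~\ref{dip}, $H_X$ depends only on the pair $(L, D)$; I am therefore free to specialise the $f_{ij}$ to any forms of degree $d_{ij}$ compatible with the regular-sequence hypothesis of Definition~\ref{twsc}. I take $f_{ij} = \prod_{h=1}^{d_{ij}} l_{ij}^h$ with the $l_{ij}^h$ generic linear forms, so that any $c$-tuple drawn one from each family $\mathcal{L}_1, \ldots, \mathcal{L}_c$ is a regular sequence (which guarantees the hypothesis of Definition~\ref{twsc}).

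The heart of the proof is the decomposition, for each $\alpha = (a_1, \ldots, a_c) \in L$,
\[
I_\alpha = (f_{1a_1}, \ldots, f_{ca_c}) = \bigcap_{1 \le h_i \le d_{ia_i}} (l_{1a_1}^{h_1}, \ldots, l_{ca_c}^{h_c}).
\]
The inclusion $\subseteq$ is immediate from $f_{ia_i} \in (l_{ia_i}^{h_i})$. For the reverse inclusion, $I_\alpha$ is a complete intersection of codimension $c$, hence unmixed, while its zero locus decomposes as $\bigcap_i V(f_{ia_i}) = \bigcup_{(h_1,\ldots,h_c)} V(l_{1a_1}^{h_1},\ldots,l_{ca_c}^{h_c})$, a union of distinct codimension-$c$ linear varieties whose defining ideals $(l_{1a_1}^{h_1}, \ldots, l_{ca_c}^{h_c})$ are prime. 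Localizing at such a minimal prime $P$, all factors $l_{ia_i}^{h'}$ with $h' \ne h_i$ become units by genericity, so $I_\alpha R_P = P R_P$; hence the $P$-primary component of $I_\alpha$ equals $P$ itself, and $I_\alpha$ is the intersection of its minimal primes, as claimed.

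Next I set up the bijection
\[
\{(\alpha, h) : \alpha \in L,\ 1 \le h_i \le d_{ia_i}\} \longleftrightarrow L_D, \qquad b_i(\alpha, h) := \sum_{j=1}^{a_i - 1} d_{ij} + h_i.
\]
Each $b_i$ uniquely recovers the block $a_i$ and offset $h_i$, so the map is bijective in both directions. If $\alpha \le K = (k_1, \ldots, k_c)$ for a generator of $L$, then $b_i \le \sum_{j=1}^{a_i} d_{ij} \le \sum_{j=1}^{k_i} d_{ij} = (K')_i$, hence $\beta \le K'$ lies in $L_D$; conversely, every $\beta \in L_D$ returns some $\alpha \in L$ by reading off blocks. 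Under this bijection, the ideal $(l_{1a_1}^{h_1}, \ldots, l_{ca_c}^{h_c})$ coincides with the complete-intersection ideal at index $\beta$ defining $Y$, because the $b_i$-th entry of the ordered family $\mathcal{L}_i$ is, by construction, $l_{ia_i}^{h_i}$.

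Combining,
\[
I_X = \bigcap_{\alpha \in L} I_\alpha = \bigcap_{(\alpha, h)} (l_{1a_1}^{h_1}, \ldots, l_{ca_c}^{h_c}) = I_Y,
\]
so $X = Y$ as subschemes of $\pp^n$ and therefore $H_X = H_Y = H_{L_D}$. The main obstacle is the decomposition of $I_\alpha$: it is the only step where the genericity of the linear forms is used in an essential way, and the only one that is not pure combinatorial bookkeeping.
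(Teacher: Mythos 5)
Your proposal is correct and follows essentially the same route as the paper: use Remark~\ref{dip} to specialise each $f_{ij}$ to a product of generic linear forms, unfold the families into the ordered families $\mathcal{L}_i$, and identify $I_X$ with the ideal of the partial intersection supported on $L_D$ via the block/offset correspondence. The only difference is that you spell out the step the paper dismisses as ``a matter of computation,'' namely the decomposition of each complete intersection $I_\alpha$ into the intersection of its minimal (linear) primes, which you justify correctly by unmixedness and localization.
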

\begin{proof}
By definition $I_X=\cap_{(j_1, \ldots,j_c)\in L}(f_{1j_1}, \ldots,f_{cj_c}).$ 
Now we denote by $Y$ the partial intersection supported on $L_D$ with respect the $c$ ordered families of linear forms  %$\mathcal{L}_i=\{l_{ij}^h \ | \ 1\le j\le a_i, 1\le h \le d_{ij}\}.$ 
$\mathcal{L}_i=(l_{i11},\ldots,l_{i1d_{i1}},l_{i21},\ldots,l_{i2d_{i2}},\ldots,l_{ia_i1},\ldots,l_{ia_id_{ia_i}}),$ $1\le i\le c.$ Now if $\alpha$ is an integer such that 
$1\le\alpha\le\sum_{s=1}^{a_i}d_{is}$ we set $$t_{\alpha}:=\max\{j\mid d_{i1}+\ldots+d_{ij} <\alpha\}+1$$ and
 $$h_{\alpha}:=\alpha-\sum_{s=1}^{t_{\alpha}-1}d_{is}$$ and $p_{i\alpha}:=l_{it_{\alpha}h_{\alpha}}.$ Then, with this notation 
 $$I_Y=\bigcap_{(\alpha_1,\ldots,\alpha_c)\in L_D}(p_{1\alpha_1},\ldots,p_{c\alpha_c}).$$
It is a matter of computation to show that $I_X=I_Y$ and this completes the proof.
\end{proof}

In the next corollary we lead back the computation of the Hilbert function of a tower scheme to that of a partial intersection. The Hilbert function of a 
partial intersection was explicitly computed in \cite{RZ1}.

\begin{cor}\label{HFtow}
If $X$ is a tower scheme supported on a tower set $T$ with respect to families of forms of degrees $D,$ then $H_X=H_{(T^{\#})_D}.$
\end{cor}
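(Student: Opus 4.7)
The plan is to chain together the three results established immediately before the corollary: Proposition~\ref{towleft} (which asserts $T^{\#}$ is a left segment), Proposition~\ref{hftower} (which compares the Hilbert functions of tower schemes on $T$ and on $T^{\#}$ using the same families of forms), and Proposition~\ref{LD} (which computes the Hilbert function of a tower scheme on a left segment via an associated partial intersection on $L_D$). Since each of these has already been proved, the corollary should follow formally from a two-step equality.

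Concretely, I would first apply Proposition~\ref{hftower} directly to $X$: this gives $H_X = H_Y$, where $Y$ is the tower scheme supported on $T^{\#}$ with respect to exactly the same families $\ff_1,\ldots,\ff_c$ (so the associated degree data for $Y$ is still $D$, up to the relabeling forced by the indexing of $\pi_i(T^{\#})$ versus $\pi_i(T)$).

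Next, invoking Proposition~\ref{towleft}, the support $T^{\#}$ of $Y$ is a left segment. Thus Proposition~\ref{LD} is applicable to $Y$, and it yields $H_Y = H_{(T^{\#})_D}$. Combining the two equalities $H_X = H_Y$ and $H_Y = H_{(T^{\#})_D}$ gives the desired conclusion.

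The only thing that requires any thought is the bookkeeping that the collection of degrees $D$ carried along by Proposition~\ref{hftower} is exactly the degree data needed to invoke the construction $(T^{\#})_D$ of Proposition~\ref{LD}; this is transparent because both propositions are phrased in terms of the \emph{same} families of forms, so no real obstacle arises. In effect the corollary is just the packaging of the three preceding propositions into a single formula for $H_X$.
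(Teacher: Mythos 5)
Your proof is correct and follows essentially the same route as the paper, which simply cites Propositions~\ref{hftower} and~\ref{LD} (with Proposition~\ref{towleft} implicitly guaranteeing that $T^{\#}$ is a left segment so that Proposition~\ref{LD} applies). Your additional remark about tracking the degree data $D$ through the relabeling is a reasonable bookkeeping point that the paper leaves tacit.
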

\begin{proof}
It follows just using Propositions \ref{hftower} and \ref{LD}.
\end{proof}

\section{Generalized tower sets: a characterization of aCM property} 
\markboth{\it Generalized tower sets: a characterization of aCM property}{\it Generalized tower sets: a characterization of aCM property}
In this section we will generalize tower sets in such a way to characterize aCM squarefree monomial ideal of codimension $2.$
%we will study equidimensional squarefree monomial ideals arising from tower sets. 
\par
Let $I\subset k[x_1,\ldots,x_n]$ be an equidimensional squarefree monomial ideal of codimension $c$ and let $I=\prm_1\cap\ldots\cap\prm_t$ be its primary decomposition. Each $\prm_i$ is a prime ideal of the type $(x_{a_{i1}},\ldots,x_{a_{ic}}).$ So we can consider the subset  
$\sd(I):=\{\{a_{i1},\ldots,a_{ic}\}\mid 1\le i\le t\}$ of $C_{c,n}.$ 
%\par
%Let $\sigma:\zc\to C_{c,n}$ be the map defined by $\sigma(a_1,\ldots,a_n)=\{a_1,\ldots,a_n\}.$
\par
Vice versa to $\sd\subseteq C_{c,n}$ we can associate an equidimensional squarefree monomial ideal 
$$I_{\sd}:=\bigcap_{\{a_1,\ldots,a_c\}\in\sd} (x_{a_1},\ldots,x_{a_c}).$$
\begin{dfn}
Let $\sd\subseteq C_{c,n}.$ We will say $\sd$ aCM if $I_{\sd}$ is an aCM ideal.
\end{dfn}

\begin{dfn}
Let $\sd\subseteq C_{c,n}.$ We will say that $\sd$ is {\em towerizable} if there exists a permutation $\tau$ on $[n]$ and an ordinante function 
$\omega:C_{c,n}\to (\zc)^*$ such that $\tau(\omega(\sd))$ is a tower set.
\end{dfn}
%\begin{rem}
%Collegare $I_{\sd}$ con un tower scheme associato a $\omega(\sigma(\sd)).$
%\end{rem}
\begin{rem}
Let $\sd\subseteq C_{c,n}.$ Note that $\sd$ is {\em towerizable} if there exists a tower set $T$ and families $\ff_i\subseteq \{x_1,\ldots,x_n\},$ such that  $\sd(I_T(\ff_1,\ldots,\ff_c))=\sd.$
\end{rem}
%Vice versa to $\sd\subseteq C_{c,n}$ we can associate an equidimensional squarefree monomial ideal 
%$$I_{\sd}=\bigcap_{\{a_1,\ldots,a_c\}\in\sd} (x_{a_1},\ldots,x_{a_c}).$$
%\begin{dfn}
%Let $\sd\subseteq C_{c,n}.$ We will say $\sd$ aCM if $I_{\sd}$ is an aCM ideal.
%\end{dfn}

By Theorem \ref{tower-aCM} if $\sd$ is towerizable then $\sd$ is aCM, however there are aCM equidimensional squarefree monomial ideals $I$ such that $\sd(I)$ is not towerizable. Here it is an example in codimension $2.$

\begin{exm}\label{ex}
Let $\sd=\big\{\{1,2\},\{3,4\},\{5,6\},\{4,6\},\{1,4\},\{1,6\}\big\}.$ Then it is easy to check that $I_{\sd}$ is the determinantal ideal generated by the order $3$ minors of the matrix
$$\begin{pmatrix}x_1 & 0 & 0 \\ x_2 & x_3 & x_5 \\ 0 & x_4 & 0 \\ 0 & 0 & x_6\end{pmatrix},$$
so $\sd$ is aCM. Let us suppose that $\sd$ is towerizable. Then there exists a tower scheme $X$ with support on a tower set $T$ such that $\sd(I_X)=\sd.$ Of course $|T|=6$ and there is not a variable $x_k$ such that the ideal $(x_k)$ contains $4$ of the $6$ minimal primes of $I_{\sd}.$ Consequently, $|\pi_2(T)|\le 3$ and for every $a\in\pi_2(T)$ $|T_a|\le 3$ so we have only three possibilities 
\begin{itemize}
	\item[1)] $\pi_2(T)=\{a,b\}$ with $|T_a|=3$ and $|T_b|=3;$
	\item[2)] $\pi_2(T)=\{a,b,c\}$ with $|T_a|=2,$ $|T_b|=2$ and $|T_c|=2;$
	\item[3)] $\pi_2(T)=\{a,b,c\}$ with $|T_a|=3,$ $|T_b|=2$ and $|T_c|=1.$
\end{itemize}
The first two cases cannot occur since $I_X=I_S$ does not contain monomials of degree two.
\par
Therefore $T_a\supset T_b\supset T_c$ and $T_a=\{h_1,h_2,h_3\},$ $T_b=\{h_1,h_2\},$ $T_c=\{h_1\}$ for some $h_i$'s and thus $T=\big\{(h_1,a),(h_2,a),(h_3,a),(h_1,b),(h_2,b),(h_1,c)\big\}.$ But the numbers $2,$ $3$ and $5$ belong each to one only element of $S$ whereas in $T$ there are only two such numbers, precisely $h_3$ and $c.$ 
\end{exm}
Because of the previous example it is natural to ask which sets $\sd\subseteq{C_{c,n}}$ are aCM. We will give a characterization in codimension $2$ 
(see Theorems \ref{gtsacm} and \ref{gts-acm}).

\begin{dfn}\label{div}
Let $h\in\zp.$ Let $\sd\subseteq C_{2,n}.$ We set $$\sd:h:=\{A\in\sd\mid h\not\in A\}.$$ If $S\subseteq\zp^2$ 
we set $$S:h:=\{\alpha\in S\mid\pi_1(\alpha)\ne h\text{ and }\pi_2(\alpha)\ne h\}.$$
\end{dfn}

\begin{rem}\label{div-acm}
Note that if $\sd$ is aCM then $\sd:h$ is aCM. Indeed, if $M$ is an Hilbert-Burch matrix for $I_S$ then $I_{\sd:h}$ is generated by the maximal minors of the matrix obtained from $M$ by replacing $x_h$ with $1.$
\end{rem}

In the sequel we will use the following result which shows that if $S\subseteq{C_{2,n}}$ is aCM then also the scheme obtained by replacing 
$(x_i,x_j)\supseteq I_{\sd}$ with $(h_i,h_j),$ generic complete intersections, is aCM.

\begin{prp}\label{acmci}
Let $\sd\subseteq{C_{2,n}}$ be an aCM set, with $I_{\sd}\subset k[x_1,\ldots,x_n].$ Let $h_1,\ldots,h_n\in k[y_1,\ldots,y_m]$ be forms, such that $\depth(h_i,h_j)=2$ for every $\{i,j\}\in\sd$ and
$$\depth(h_i,h_j,h_u,h_v)\ge 3$$ for every $\{i,j\},\{u,v\}\in\sd,$ $\{i,j\}\ne\{u,v\}.$
Then the ideal $J=\bigcap_{\{i,j\}\in\sd}(h_i,h_j)$ is aCM.
\end{prp}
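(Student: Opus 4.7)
The plan is to construct a length-$2$ free resolution of $S/J$ by specializing the Hilbert--Burch resolution of $I_{\sd}$. Since $\sd$ is aCM of codimension $2$, Hilbert--Burch provides
$$0\to R^{t-1}\xrightarrow{M}R^{t}\xrightarrow{[m_1\ \cdots\ m_t]}R\to R/I_\sd\to 0,$$
where $I_\sd=I_{t-1}(M)$ and each $m_\ell=\prod_{i\in C_\ell}x_i$ corresponds to a minimal vertex cover $C_\ell$ of the graph $([n],\sd)$; because $I_\sd$ is squarefree monomial, $M$ can be chosen with monomial (in fact variable-or-zero) entries. Substituting $x_i\mapsto h_i$ in $M$ yields a $t\times(t-1)$ matrix $M'$ over $S$ whose maximal minors are $\tilde m_\ell:=\prod_{i\in C_\ell}h_i$. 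Setting $J':=I_{t-1}(M')=(\tilde m_1,\ldots,\tilde m_t)$, I will establish (i) $\codim_{S} J'\ge 2$, whence by Buchsbaum--Eisenbud (the converse to Hilbert--Burch) the substituted complex resolves $S/J'$ and $J'$ is aCM, and (ii) $J'=J$.

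For (i): if $(f)\subset S$ were a height-$1$ prime containing $J'$, the irreducible $f$ would divide every $\tilde m_\ell$, hence some $h_i$ with $i\in C_\ell$ for each $\ell$. The set $V:=\{i:f\mid h_i\}$ then meets every minimal vertex cover, so $[n]\setminus V$ cannot be a vertex cover, and some edge $\{i,j\}\in\sd$ lies inside $V$. Then $(h_i,h_j)\subseteq (f)$ contradicts $\depth(h_i,h_j)=2$.

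For (ii): the inclusion $J'\subseteq J$ is clear. Both $J'$ and $J$ are unmixed of codimension $2$ (the former by (i), the latter as an intersection of complete intersections), so it suffices to verify the equality locally at every minimal prime $\prm$ of $J$. The hypothesis $\depth(h_i,h_j,h_u,h_v)\ge 3$ for distinct edges ensures that each such $\prm$ is minimal over exactly one ideal $(h_i,h_j)$ (otherwise $\prm$ would contain two distinct such ideals, and hence a subideal of height $\ge 3$). Thus $J_\prm=(h_i,h_j)_\prm$, and it remains to show $(h_i,h_j)_\prm\subseteq J'_\prm$. Writing $U_\prm:=\{k:h_k\in\prm\}$, the same depth hypothesis forbids any edge from lying inside $U_\prm\setminus\{i\}$; therefore $\bigl([n]\setminus U_\prm\bigr)\cup\{i\}$ is a vertex cover of $\sd$, and a minimal vertex cover $C_\ell$ inside it must contain $i$ in order to cover $\{i,j\}$. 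Since $C_\ell\cap U_\prm=\{i\}$, the product $\tilde m_\ell$ becomes a unit multiple of $h_i$ in $S_\prm$, yielding $h_i\in J'_\prm$. By symmetry $h_j\in J'_\prm$, so $(h_i,h_j)_\prm\subseteq J'_\prm\subseteq J_\prm=(h_i,h_j)_\prm$ gives equality.

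The main obstacle is step (ii); the combinatorial core is the translation of the depth conditions into vertex-cover statements, combined with the careful local analysis. The assumption $\depth(h_i,h_j,h_u,h_v)\ge 3$ for distinct edges is doing the essential work: it both guarantees the codimension-$2$ unmixedness of $J$ and rules out the existence of an edge trapped inside $U_\prm\setminus\{i\}$, which is exactly what lets the minimal vertex covers of $\sd$ produce enough unit-times-$h_i$ generators of $J'$ after localization.
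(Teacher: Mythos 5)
Your proposal is correct and starts from the same key idea as the paper's proof --- substitute $h_i$ for $x_i$ in a Hilbert--Burch matrix of $I_{\sd}$ and show that the resulting determinantal ideal $J'$ equals $J$ --- but the two arguments diverge at the equality step. The paper proves $J'\subseteq J$ essentially as you do and then closes the gap by a degree count: it shows $\deg J'=\deg J=\sum_{\{i,j\}\in\sd}(\deg h_i)(\deg h_j)$ by induction on $n$, splitting $\sd$ as $(\sd:n)\cup\sd_{(n)}$ and invoking Remark \ref{div-acm}. You instead verify the equality locally at minimal primes, with the vertex-cover translation of the depth hypotheses doing the combinatorial work; a side benefit is that your step (i) makes explicit the fact, left implicit in the paper, that $J'$ has codimension $2$, which is what licenses both the converse of Hilbert--Burch and any unmixedness or degree comparison. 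Two small points should be repaired. First, the parenthetical claim that the matrix can be taken with variable-or-zero entries is false in general (for $\sd=\{\{1,3\},\{1,4\},\{2,3\},\{2,4\}\}$ one gets $I_{\sd}=(x_1x_2,x_3x_4)$, whose Hilbert--Burch matrix has entries of degree $2$), but you never actually use it: substitution is a ring homomorphism, so the maximal minors of $M'$ are the $\tilde m_\ell$ in any case. Second, for unmixed ideals $J'\subseteq J$ of the same codimension it does not suffice in general to check equality at the minimal primes of the \emph{larger} ideal $J$ (compare $(xy)\subseteq (x)$ in $k[x,y]$); one must localize at the minimal primes of $J'$. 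This is harmless here, because the argument of your step (i) shows that every height-$2$ prime containing $J'$ contains $(h_i,h_j)$ for some $\{i,j\}\in\sd$ and hence contains $J$, so the two sets of minimal primes coincide --- but that one line needs to be said for the local-to-global reduction to be valid.
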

\begin{proof}
We consider the following vectors $$\xu=(x_1,\ldots,x_n) \text{ and } \hu=(h_1,\ldots,h_n).$$
\par
Since $I_{\sd}$ is aCM we can consider $M=M(\xu),$ an Hilbert-Burch matrix for $I_{\sd}.$ We claim that $N=M(\hu)$ is an Hilbert-Burch matrix for $J.$ We have to prove that $J=I(N).$ Let $g\in I(N)$ be a maximal minor of $N.$ Then $g=f(\hu),$ with $f(\xu)\in I_S.$ Therefore 
$f(\xu)=\lambda_i(\xu)x_i+\mu_j(\xu)x_j$ for every $\{i,j\}\in S,$ consequently $f(\hu)\in (h_i,h_j)$ for every $\{i,j\}\in\sd.$ So $I(N)\subseteq J.$ 
\par
To conclude the proof it is enough to show that $\deg I(N)=\deg J.$ By the generality of the forms $h_1,\ldots,h_n,$ we have that 
 $$\deg J=\sum_{\{i,j\}\in\sd}(\deg h_i)(\deg h_j).$$
Now we proceed by induction on $n.$ If $n=2$ then $\sd=\{1,2\}$ and $I(N)=(h_1,h_2).$ So we can suppose that 
$\deg I(N)=\sum_{\{i,j\}\in\sd}(\deg h_i)(\deg h_j),$ when $\sd\subseteq C_{2,n-1}.$ We can write $\sd=(\sd:n)\cup{\sd}_{(n)}$ where
$\sd:n=\{\alpha\in S\mid n\not\in\alpha\}$ and $\sd_{(n)}=\sd\setminus (\sd:n).$ By Remark \ref{div-acm}, $\sd:n$ is aCM.
%and $I_{S:n}=I_S:x_n.$
Let $\widehat{M}(\xu)$ be an Hilbert-Burch matrix for $I_{\sd:n}.$ We set $\widehat{N}=\widehat{M}(\hu).$ 
%Then $\deg I(\widehat{N})=\sum_{\{i,j\}\in (S:n)}(\deg h_i)(\deg h_j).$ 
%Moreover $I_S=I_{S:n}\cap (x_n,\prod_{u\in S_{(n)}}{x_u}).$
Therefore 
 $$I_{\sd}=I(M)=(I(M):x_n)\cap (x_n,\prod_{u\in {\sd}_{(n)}}{x_u})=I(\widehat{M})\cap (x_n,\prod_{u\in{\sd}_{(n)}}{x_u}).$$
Hence, using the inductive hypothesis, we get
 $$\deg I(N)=\deg (I(\widehat{N}))+\deg (h_n,\prod_{u\in {\sd}_{(n)}}{h_u}))=$$
 %$$J=\bigcap_{\{i,j\}\in S}(h_i,h_j)=\bigcap_{\{i,j\}\in (S:n)}(h_i,h_j)\cap\bigcap_{\{i,j\}\in S_{(n)}}(h_i,h_j)=$$
 %$$\bigcap_{\{i,j\}\in (S:n)}(h_i,h_j)\cap\bigcap_{\{u,n\}\in S_{(n)}}(h_u,h_n)=
 %\bigcap_{\{i,j\}\in (S:n)}(h_i,h_j)\cap(h_n,\prod_{\{u,n\}\in S_{(n)}} h_u)\rw$$
 $$\sum_{\{i,j\}\in (\sd:n)}(\deg h_i)(\deg h_j)+(\deg h_n)\sum_{\{u,n\}\in {\sd}_{(n)}} \deg h_u=
 \sum_{\{i,j\}\in\sd}(\deg h_i)(\deg h_j).$$
%Now let $g(\yu)\in J.$ Then $g(\yu)=g_i(\yu)h_i+g_j(\yu)h_j,$ for every $\{i,j\}\in S.$ Let $m_1,\ldots,m_r$ be the maximal %minors of $M(\xu).$ Then $I_S=(m_1,\ldots,m_r).$ Now let us consider the ideal $\widetilde{I_S}=(m_1,\ldots,m_r)\subset %k[\xu,\yu].$ Of course $\widetilde{I_S}$ is aCM and $M(\xu)$ is an Hilbert-Burch matrix for $\widetilde{I_S}.$ We set %$\phi_{ij}(\xu,\yu)=g_i(\yu)x_i+g_j(\yu)x_j.$
%On the other hand, since $I(M)$ is a monomial ideal, $I(M)$ has a monomial basis $\{m_1,\ldots,m_r\}.$ Therefore $I(M)$ has as a %minimal set of generatos $\{m_1(H_1,\ldots,H_n),\ldots,m_r(H_1,\ldots,H_n)\}.$ So $I(N)=\bigcap_{\{i,j\}\in V}(H_i,H_j),$ with %$S\subseteq V.$
\end{proof}

%In the sequel we set $\Delta=\{(x,y)\in\zp^2\mid x=y\}.$
We recall that if $T\subseteq\zp^2$ and $i\in\zp$ then $$T^i=\{j\in\pi_2(T)\mid (i,j)\in T\}$$ and $$T_i=\{j\in\pi_1(T)\mid (j,i)\in T\}.$$

\begin{rem}\label{remphi}
Let $T\subset\zd$ be a tower set. Then
\begin{itemize}
	\item [1)] $a<b$ and $T_a\ne\emptyset$ $\rw$ $(a,b)\not\in T.$ Indeed, the assumption implies 
	$T_a\supseteq T_b.$ Since $(a,a)\not\in T,$ we have $a\not\in T_a,$ therefore $a\not\in T_b$ i.e. $(a,b)\not\in T.$
	\item [2)] $a<b$ and $(b,a)\in T$ $\rw$ $(a,b)\not\in T.$ Indeed, the assumption implies $T_a\ne\emptyset$ so, by the previous item, $(a,b)\not\in T.$
	\item [3)] $\{(a,b),(b,a)\}\not\subseteq T$ for every $a$ and $b.$ It follows by item $2.$
\end{itemize}
Note that by item $3,$ $|T|=|\varphi(T)|$ where $\varphi$ is the forgetful function.
\end{rem} 
%\begin{proof}
%\begin{itemize}
%	\item [1)] If $a<b$ and $T_a\ne\emptyset$ then $T_a\supseteq T_b.$ Since $(a,a)\not\in T,$ we have $a\not\in T_a,$ therefore $a\not\in T_b$ i.e. $(a,b)\not\in %T.$
%	\item [2)] If $a<b$ and $(b,a)\in T$ then $T_a\ne\emptyset$ so, by the previous item, $(a,b)\not\in T.$
%	\item [3)] It follows by item $2.$
%\end{itemize}
%\end{proof}

\begin{prp}\label{righe}
Let $T\subset\zp^2$ be a tower set. Then $T^i$ and $T^h$ are comparable under inclusion for every $i$ and $h.$
\end{prp}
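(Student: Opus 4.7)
The plan is to argue by contradiction. Assume $T^i$ and $T^h$ are not comparable under inclusion; then one can select witnesses $j\in T^i\setminus T^h$ and $k\in T^h\setminus T^i$. These two elements are necessarily distinct, for if $j=k$ it would simultaneously lie in both rows. Unwrapping the definitions, the existence of these witnesses says precisely that $(i,j),(h,k)\in T$ while $(h,j),(i,k)\notin T$.

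The key step is to invoke the tower axiom, which is an assertion about the columns $T_\alpha$ (indexed by the second coordinate): whenever $\alpha<\beta$ and $T_\alpha\neq\emptyset$, one has $T_\alpha\supseteq T_\beta$. The elements $i\in T_j$ and $h\in T_k$ certify that both of the columns $T_j$ and $T_k$ are non-empty, so whichever of $j,k$ is smaller yields a column containing the other.

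One then splits on the order between $j$ and $k$ (not between $i$ and $h$, which plays no role). If $j<k$, the nesting $T_j\supseteq T_k$ combined with $h\in T_k$ forces $h\in T_j$, i.e.\ $(h,j)\in T$, contradicting $j\notin T^h$. The opposite case $k<j$ is symmetric and yields $(i,k)\in T$, again a contradiction.

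There is no serious obstacle here: the proposition is essentially a formal shadow of the defining nestedness of the columns. The only point worth flagging is that the tower axiom is asymmetric in the two coordinates of $\zp^2$, so the case analysis must be driven by the second-coordinate witnesses $j,k$ rather than by the row indices $i,h$.
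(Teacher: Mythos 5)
Your argument is correct and is essentially the paper's own proof, merely phrased by contradiction with two witnesses instead of directly showing $T^h\subseteq T^i$ from a single witness $j\in T^i\setminus T^h$: both rest on the observation that the nonempty columns $T_j$ and $T_k$ are nested by the tower axiom, forcing one of the excluded pairs $(h,j)$ or $(i,k)$ into $T$. The care you take with $j\neq k$ and with which column's nonemptiness licenses the axiom is exactly what the paper's proof uses implicitly.
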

\begin{proof}
Let $j\in T^i$ be such that $j\not\in T^h,$ we have to show that $T^h\subset T^i.$ Let $k\in T^h,$ i.e. $h\in T_k;$ but $h\not\in T_j$ therefore since $T$ is a tower set we have that $T_j\subset T_k,$ so $i\in T_k$ i.e. $(i,k)\in T$ that implies that $k\in T^i.$
\end{proof}

\begin{prp}\label{nonrip}
Let $T\subset\zd$ be a tower set. 
\begin{itemize}
	\item [1)] Let $h\in\pi_2(T)$ be such that $T_h\supseteq T_j$ for every $j\in\pi_2(T).$ Then $h\not\in\pi_1(T).$
	\item [2)] Let $h\in\pi_1(T)$ be such that $T^h\supseteq T^i$ for every $i\in\pi_1(T).$ Then $h\not\in\pi_2(T).$
\end{itemize}

\end{prp}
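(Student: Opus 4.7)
The plan is to prove both statements by contradiction, exploiting the defining restriction $T \subset \zd = (\zz_+^2)^*$, which forces every element $(a,b) \in T$ to satisfy $a \ne b$ (as already recorded in Remark~\ref{remphi}, item 3). Parts (1) and (2) are perfectly symmetric, so I would prove (1) carefully and then invoke symmetry for (2).

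For part (1), I would assume for contradiction that $h \in \pi_1(T)$. Then there exists some $k \in \zp$ with $(h,k) \in T$; in particular $k \in \pi_2(T)$ and $h \in T_k$. The maximality hypothesis $T_h \supseteq T_j$ for every $j \in \pi_2(T)$ applied to $j = k$ now gives $T_h \supseteq T_k$, hence $h \in T_h$. This means $(h,h) \in T$, contradicting $T \subset \zd$.

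For part (2), the same reasoning runs with the roles of the first and second projection swapped: assuming $h \in \pi_2(T)$ yields some $i \in \pi_1(T)$ with $(i,h) \in T$, i.e.\ $h \in T^i$, and the maximality hypothesis $T^h \supseteq T^i$ forces $h \in T^h$, that is $(h,h) \in T$, again contradicting $T \subset \zd$. One can also just observe that applying part (1) to the tower set $\{(b,a) \mid (a,b) \in T\}$ (which is a tower set by Proposition~\ref{righe} or a direct verification using the tower property) immediately gives part (2).

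There is no real obstacle here: the argument uses only the definition of the auxiliary sets $T_h$ and $T^h$ and the distinct-coordinates condition; the tower property itself is not even invoked, beyond what is already built into the hypothesis.
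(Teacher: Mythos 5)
Your main argument is correct and is essentially the paper's proof: for (1) the paper likewise observes that $(h,j)\in T$ gives $h\in T_j\subseteq T_h$, hence $(h,h)\in T$, contradicting $T\subset\zd$; your direct symmetric argument for (2) also works (the paper instead routes part (2) through Proposition~\ref{righe}, but that detour is not needed for the implication itself). However, your parenthetical alternative for (2) is wrong: the transposed set $\{(b,a)\mid (a,b)\in T\}$ need \emph{not} be a tower set. For example $T=\{(2,1),(3,1),(3,2)\}$ is a tower set ($T_1=\{2,3\}\supseteq T_2=\{3\}$), but its transpose $T'=\{(1,2),(1,3),(2,3)\}$ has $T'_2=\{1\}\not\supseteq T'_3=\{1,2\}$. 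Proposition~\ref{righe} only says the sets $T^i$ are pairwise comparable; it does not say the inclusions run in the direction required by Definition~\ref{twse}, so it cannot be used to justify that claim. Since this is only an optional aside, your proof stands once you delete that sentence.
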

\begin{proof}
\begin{itemize}
	\item [1)] If $(h,j)\in T$ then $h\in T_j\subseteq T_h,$ i.e. $(h,h)\in T.$
	\item [2)] Using Proposition \ref{righe} the proof is analogous to item $1.$
\end{itemize}

\end{proof}

Let $T\subset\zd$ be a tower set. Let $h\in\pip.$ We set
% $$F_T(h)=\{j\in\pi_2(T)\mid T_h\subset T_j \text{ and }j>k\,\,\forall (h,k)\in T\}.$$
  $$F_T(h):=\{j\in\pi_2(T)\mid T_h\subset T_j \text{ and } (h,j)\not\in T\}.$$
Note that if $j\in F_T(h)$ then $j<h.$
%\par
%Let $\pip=\{c_1,\ldots,c_r\},$ with $c_1<\ldots<c_r.$ We set 
%  $$C_T(c_j)=
%\left\{
%\begin{array}{ll}
%C_T(c_{j-1})\cup\{c_j\} & \text{if }  F_T(c_{j})\subseteq C_T(c_{j-1}) \\
%C_T(c_{j-1}) & \text{otherwise}
%\end{array} \right.
%$$
%for $1\le j\le r,$
% $$C_T(c_j)=\{h\in\pip\mid F_T(h)\subseteq C_T(c_{j-1})\}\cup C_T(c_{j-1}),$$
%where we set $C_T(c_0)=\emptyset.$
%Finally we set $C_T=C_T(c_r).$
%\par
%Note that if $j\in C_T$ then $F_T(j)\subseteq C_T.$
 %$$C_T=\{h\in\pip\mid F_T(h)\subseteq\pip\}.$$
 
\begin{dfn}
Let $U\subseteq C_{2,n}.$ We say that $U$ is {\em connected} if for every $A,B\in U$ there is $C\in U$ such that $A\cap C\ne\emptyset$ and $B\cap C\ne\emptyset.$ Let $S\subset\zd.$ We say that $S$ is {\em connected} if $\varphi(S)$ is connected.
\end{dfn}

\begin{dfn}\label{tg}
Let $S\subset\zd$ be a finite set. We say that $S$ is a {\em generalized tower set} if 
\begin{itemize}
	\item [1)] $S$ is connected;
	\item [2)] $S=T\cup S_0$ where $T$ is a tower set
\end{itemize}
and $S_0$ has the following further properties
\begin{itemize}
	\item [3)] for every $(i,j)\in S_0,$ $i\not\in\pi_1(T)\cup\pi_2(T)$ and $j\in\pi_1(T)\cap\pi_2(T);$
	\item [4)] for every $(i,j)\in S_0$ and $h\in F_T(j),$ $(i,h)\in S_0.$
\end{itemize}
\end{dfn}

\begin{dfn}\label{gts}
Let $S\subset({\zz}_+^2)^*$ be a generalized tower set. Let $R=k[x_1,\ldots,x_n],$ $n\ge 3.$ Let $\ff_i=\{f_{ij}\mid j\in\pi_i(S)\},$ $1\le i\le 2,$ where each $f_{ij}$ is a form satisfying such conditions of genericity: for every $(a_1,a_2)\in S,$ $f_{1a_1},f_{2a_2}$ are coprime and for every $(a_1,a_2),$ $(b_1,b_2)\in S,$ with $\{a_1,a_2\}\ne\{b_1,b_2\},$ $\depth(f_{1a_1},f_{2a_2},f_{1b_1},f_{2b_2})\ge 3.$
%$f_{1j}$ and $f_{1h}$ are coprime when there is $k$ such that $(j,k)$ and $(h,k)$ belong to $S$ and for every $\alpha=(a_1,a_2)\in S$ the forms %$(f_{1a_1},f_{2a_2})$ are coprime. 
If $\alpha=(a_1,a_2)\in S,$ we will denote by $I_{\alpha}$ the complete intersection ideal generated by $f_{1a_1},f_{2a_2}.$ We set
 $$I_S(\ff_1,\ff_2):=\bigcap_{\alpha\in S}I_{\alpha}.$$
It defines a $2$-codimensional subscheme of $\pp^n$ called {\em generalized tower scheme}, with support on $S,$ with respect to the families $\ff_1,\ff_2.$
\end{dfn}

In the sequel if $S\subset({\zz}_+^2)^*$ we will set for short $I_S:=I_{\varphi(S)},$ consequently $S$ will be said aCM if $I_S$ is aCM.
\par
In order to prove our results on the Cohen-Macaulayness of such schemes we need several lemmas.

\begin{lem}\label{minrg}
Let $S=T\cup S_0$ be a generalized tower set. Let $i\in\pi_1(S_0)$ and let $m=\min\{j\mid (i,j)\in S_0\}.$
Then $F_T(m)=\emptyset.$
\end{lem}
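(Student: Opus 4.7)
The plan is to argue by contradiction, unwinding the definitions of generalized tower set and of $F_T$. First I would verify that $F_T(m)$ is actually defined: since $(i,m)\in S_0$, condition (3) of Definition \ref{tg} forces $m\in\pi_1(T)\cap\pi_2(T)=\pip$, which is precisely the domain of $F_T$.

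Next, assume for contradiction that $F_T(m)\neq\emptyset$ and pick $h\in F_T(m)$. The remark immediately following the definition of $F_T$ gives $h<m$; I would remind the reader why: $h\in\pi_2(T)$ (as $h\in F_T(m)\subseteq\pi_2(T)$) so $T_h\neq\emptyset$, hence if $h\ge m$ the tower property would force $T_h\supseteq T_m$, contradicting the strict inclusion $T_m\subsetneq T_h$ built into the definition of $F_T(m)$.

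Now I apply condition (4) of Definition \ref{tg} to the pair $(i,m)\in S_0$ and $h\in F_T(m)$: this yields $(i,h)\in S_0$. But $h<m$ contradicts the minimality of $m=\min\{j\mid (i,j)\in S_0\}$, completing the proof.

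There is essentially no obstacle here; the statement is a direct bookkeeping consequence of properties (3) and (4) of a generalized tower set once one notices that the elements of $F_T(m)$ are strictly smaller than $m$. The only care required is to confirm that $m\in\pip$ so that $F_T(m)$ is a legitimate object, which is exactly what condition (3) is designed to ensure.
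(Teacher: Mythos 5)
Your argument is correct and is essentially the paper's own proof: pick $s\in F_T(m)$, observe $s<m$, and apply condition (4) of Definition \ref{tg} to the pair $(i,m)\in S_0$ to get $(i,s)\in S_0$, contradicting the minimality of $m$. One small slip in your side justification of $s<m$: applying the tower property to $m<s$ (using $T_m\ne\emptyset$, which holds because $m\in\pi_2(T)$, rather than $T_s\ne\emptyset$) yields $T_m\supseteq T_s$, not $T_s\supseteq T_m$, and it is this inclusion that contradicts the strict containment $T_m\subset T_s$ built into the definition of $F_T(m)$.
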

\begin{proof}
Let $s\in F_T(m);$ then $s<m$ and by Definition \ref{tg}, item 4, $(i,s)\in S_0,$ which is a contradiction.
\end{proof}

%\begin{dfn}\label{div}
%Let $h\in\zp.$ Let $U\subseteq C_{2,n}.$ We set $$U:h=\{A\in U\mid h\not\in A\}.$$ If $S\subseteq\zp^2$ 
%we set $$S:h=\{\alpha\in S\mid\pi_1(\alpha)\ne h\text{ and }\pi_2(\alpha)\ne h\}.$$
%\end{dfn}

\begin{lem}\label{ctdiv}
With the above notation, if $h\in\pip$ then for every $j\in\pip\setminus\{h\}$ we have $F_{T:h}(j)\subseteq F_{T}(j).$
%\begin{itemize}
	%\item [1)] For every $j\in C_T\setminus\{h\}$ we have $F_{T:h}(j)\subseteq F_{T}(j);$
	%\item [2)] $C_T\setminus\{h\}\subseteq C_{T:h}.$
%\end{itemize}
\end{lem}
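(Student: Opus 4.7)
The plan is to simply unwind the definitions of $F_T$ and $T{:}h$ and invoke the tower property once. Take an arbitrary $k \in F_{T:h}(j)$; by definition this means $k \in \pi_2(T{:}h)$, $(T{:}h)_j \subsetneq (T{:}h)_k$, and $(j,k) \notin T{:}h$. I need to verify the three conditions for $k \in F_T(j)$: namely $k \in \pi_2(T)$, $T_j \subsetneq T_k$, and $(j,k)\notin T$. The first is immediate from $\pi_2(T{:}h)\subseteq \pi_2(T)$.

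The crux is transferring the strict inclusion from $T{:}h$ back to $T$. First observe that for every $\ell \in \pi_2(T)\setminus\{h\}$, the definition of the colon gives $(T{:}h)_\ell = T_\ell \setminus \{h\}$, because the colon operation only deletes pairs whose first or second coordinate equals $h$. Hence, setting $\ell=j$ and $\ell=k$, the hypothesis reads $T_j \setminus\{h\} \subsetneq T_k \setminus\{h\}$. In particular both $T_j$ and $T_k$ are nonempty. Now the tower property (in its $2$-dimensional form) forces $T_j$ and $T_k$ to be comparable by inclusion: if $j<k$ and $T_j\ne\emptyset$ then $T_j\supseteq T_k$, and symmetrically for $k<j$. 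If $T_k\subseteq T_j$ held, then $T_k\setminus\{h\}\subseteq T_j\setminus\{h\}$, contradicting the strict inclusion above; therefore $T_j\subseteq T_k$, and the inclusion is strict since $T_j\setminus\{h\}\ne T_k\setminus\{h\}$.

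Finally, for the condition $(j,k)\notin T$: since $k\in\pi_2(T{:}h)$, the colon construction forces $k\ne h$, and $j\ne h$ by hypothesis. If $(j,k)$ were in $T$, then since both coordinates differ from $h$ we would have $(j,k)\in T{:}h$, contradicting $(j,k)\notin T{:}h$. Thus $(j,k)\notin T$, and we conclude $k\in F_T(j)$, which completes the proof.

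I expect no serious obstacle here; the only subtlety is making sure that the strict inclusion on the truncated columns actually upgrades to strict inclusion on the full columns, which is precisely what the tower-induced comparability of $T_j$ and $T_k$ guarantees.
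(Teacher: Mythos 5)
Your proof is correct and follows essentially the same route as the paper's: both arguments note that $(j,k)\notin T$ is immediate since neither coordinate equals $h$, and both upgrade the strict inclusion $(T{:}h)_j\subset (T{:}h)_k$ to $T_j\subset T_k$ by producing an element of $T_k\setminus T_j$ (you via the identity $(T{:}h)_\ell=T_\ell\setminus\{h\}$, the paper via an explicit witness $a$) and then invoking the tower comparability of $T_j$ and $T_k$. The only nitpick is that $T_j\ne\emptyset$ follows from $j\in\pi_2(T)$ rather than from the strict inclusion, but this is immaterial to the argument.
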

\begin{proof}
%\begin{itemize}
%	\item [1)] 
If $b\in F_{T:h}(j)$ then $(T:h)_j\subset (T:h)_b$ and $(j,b)\not\in T:h,$ with $j\ne h$ and $b\ne h,$ so $(j,b)\not\in T.$ Moreover there is $a$ such that $(a,b)\in T:h$ and $(a,j)\not\in T:h.$ Since $a\ne h$ this implies that $T_b\not\subseteq T_j.$ Since $T$ is a tower set we get that $T_j\subset T_b.$ 
%	\item [2)]
%	Let $C_T=\{c_1,\ldots,c_p\},$ with $c_1<\ldots<c_p.$ 
%	If $c_1\in C_T\setminus\{h\}$ then, by item $1,$ $F_{T:h}(c_1)\subseteq F_{T}(c_1)=\emptyset,$ hence $c_1\in C_{T:h},$ i.e. $\{c_1\}\setminus\{h\}\subseteq %C_{T:h}.$ Now, by recursion, let us suppose that $\{c_1,\ldots,c_{j-1}\}\setminus\{h\}\subseteq C_{T:h}.$
%	If $c_j\in C_T\setminus\{h\}$ then, by item $1,$ $F_{T:h}(c_j)\subseteq F_{T}(c_j)\setminus\{h\}\subseteq %C_T(c_{j-1})\setminus\{h\}=\{c_1,\ldots,c_{j-1}\}\setminus\{h\}\subseteq C_{T:h},$ hence $\{c_1,\ldots,c_{j}\}\setminus\{h\}\subseteq C_{T:h},$ 
%	for $1\le j\le p.$
%\end{itemize}
 
\end{proof}

\begin{lem}\label{divtg}
Let $S=T\cup S_0$ be a generalized tower set. Let $h\in\pip.$ Then $S:h$ is a generalized tower set with respect to the
decomposition $S:h=(T:h)\cup(S_0:h).$
\end{lem}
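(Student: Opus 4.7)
The plan is to verify each of the four defining properties of a generalized tower set for the proposed decomposition $S:h=(T:h)\cup(S_0:h)$, leaning on the tower axioms, Remark~\ref{remphi}, and Lemma~\ref{ctdiv}.

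First I would check that $T:h$ is itself a tower set: if $\alpha<\beta$ and $(T:h)_\alpha\ne\emptyset$, then $\alpha\ne h$ and $T_\alpha\supseteq T_\beta$ by the tower property of $T$; intersecting both sides with the complement of $\{h\}$ gives $(T:h)_\alpha=T_\alpha\setminus\{h\}\supseteq T_\beta\setminus\{h\}=(T:h)_\beta$.

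Next I would verify condition~$3$ of Definition~\ref{tg}. For $(i,j)\in S_0:h$, the inclusion $\pi_k(T:h)\subseteq\pi_k(T)$ together with $i\notin\pi_1(T)\cup\pi_2(T)$ immediately gives $i\notin\pi_1(T:h)\cup\pi_2(T:h)$. The nontrivial part is to show $j\in\pi_1(T:h)\cap\pi_2(T:h)$. For this I would argue by contradiction that neither $T_j\subseteq\{h\}$ nor $T^j\subseteq\{h\}$ can hold. If $T_j=\{h\}$, then $(h,j)\in T$ and, by Remark~\ref{remphi}(3), any $a$ with $(j,a)\in T$ satisfies $a\ne h$; such an $a$ exists because $j\in\pi_1(T)$. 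The tower comparison between columns $a$ and $j$ then forces $a<j$ and $(h,a)\in T$. Picking $d\ne h$ with $(d,h)\in T$ (which exists because $h\in\pi_2(T)$) and comparing columns $j$ and $h$, the case $j<h$ forces $d\in T_j=\{h\}$ and $j>h$ forces $(h,h)\in T$, both contradictions. Hence $T_j\supsetneq\{h\}$ and $j\in\pi_2(T:h)$. The symmetric chase, starting from $T^j=\{h\}$ and using $h\in\pi_1(T)$, yields the corresponding contradiction and $j\in\pi_1(T:h)$.

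For condition~$4$: given $(i,j)\in S_0:h$ and $h'\in F_{T:h}(j)$, Lemma~\ref{ctdiv} places $h'$ in $F_T(j)$, so condition~$4$ for $S$ gives $(i,h')\in S_0$; since $h'\in\pi_2(T:h)$ we have $h'\ne h$, and together with $i\ne h$ this upgrades to $(i,h')\in S_0:h$. Finally, for connectedness of $S:h$: given $A,B\in S:h$, either $A\cap B\ne\emptyset$ and we take $C=A$, or connectedness of $S$ supplies some $C\in S$ intersecting both $A$ and $B$, and I claim $C\in S:h$. Indeed, if $C$ had a coordinate equal to $h$, then since neither $A$ nor $B$ contains $h$, each of $C\cap A$ and $C\cap B$ would have to be witnessed by the other (non-$h$) coordinate of $C$, so that coordinate would lie in $A\cap B$, contradicting $A\cap B=\emptyset$.

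The main obstacle is the second step: verifying that $j$ survives in both projections of $T:h$ requires the careful case-by-case chase through tower inequalities described above, since the tower axiom directly controls only the columns $T_\alpha$ and the analogous row-side assertion has to be reconstructed from Remark~\ref{remphi} and repeated tower comparisons. Once this step is in place, the remaining verifications follow cleanly.
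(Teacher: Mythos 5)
Your proof is correct, and its skeleton (verify the four axioms of Definition \ref{tg}: the tower property of $T:h$ by deleting $h$ from each column, condition 4 via Lemma \ref{ctdiv}, connectedness inherited from $S$) matches the paper's. The one place where you genuinely diverge is the hard part of condition 3, namely that $j$ survives in $\pi_1(T:h)\cap\pi_2(T:h)$. The paper proves the stronger identity $\pi_1(T:h)\cap\pi_2(T:h)=(\pi_1(T)\cap\pi_2(T))\setminus\{h\}$ by exhibiting explicit witnesses: it places any such $k$ in the maximal column $T_b$ and the maximal row $T^a$ and invokes Proposition \ref{nonrip} to conclude $b\notin\pi_1(T)$ and $a\notin\pi_2(T)$, hence $a,b\ne h$ because $h\in\pi_1(T)\cap\pi_2(T)$. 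You instead rule out $T_j=\{h\}$ and $T^j=\{h\}$ by contradiction, reducing everything to the fact that $(h,h)\notin T$. Both arguments are valid; the paper's is arguably cleaner since Propositions \ref{righe} and \ref{nonrip} are already in place, whereas your column-side chase carries a superfluous detour (the element $a$ with $(h,a)\in T$ plays no role in the final contradiction, which needs only $T_j=\{h\}$, $T_h\ne\emptyset$ and the tower comparison of the columns $j$ and $h$), and your row-side ``symmetric chase'' is in fact immediate from Proposition \ref{righe}: if $T^j=\{h\}$ and $T^h\ne\emptyset$, either inclusion between $T^j$ and $T^h$ forces $(h,h)\in T$. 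Your expanded connectedness argument, which the paper dispatches in one sentence, correctly fills in the detail that a connecting set $C$ containing $h$ would force a common element of $A$ and $B$.
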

\begin{proof}
Of course $S:h=(T:h)\cup(S_0:h).$ 
\begin{itemize}
	\item [1)] Since $S$ is connected then $S:h$ is connected too.
	\item [2)] Let $a,b\in\pi_2(T:h),$ $a<b.$ Let $i\in(T:h)_b;$ then $(i,b)\in T:h$ i.e. $i\in T_b\subseteq T_a;$ 
	since $i\ne h$ and $b\ne h$ then $i\in(T:h)_a.$
	\item [3)] Let $(i,j)\in S_0:h.$ Of course $i\not\in\pi_1(T:h)\cup\pi_2(T:h).$
	Moreover $(\pip)\setminus\{h\}=\pi_1(T:h)\cap\pi_2(T:h).$ Indeed, $\pi_1(T:h)\cap\pi_2(T:h)\subseteq(\pip)\setminus\{h\}$ trivially;
	if $k\in(\pip)\setminus\{h\}$ then $k\ne h,$ $k\in T_b$ where $T_b\supseteq T_u$ for every $u$ and $k\in T^a$ where $T^a\supseteq T^v$ for every $v,$ 
	(see Proposition \ref{righe}), so by Proposition \ref{nonrip}, $a\ne h$ and $b\ne h$ i.e. $(a,k),(k,b)\in T:h,$ hence item 3 is clear.
	%Let $(i,j)\in S_0:h.$ Of course $i\not\in\pi_1(T:h)\cup\pi_2(T:h),$ hence item 3 is done.
	%Since $j\in C_T\setminus\{h\},$ using Lemma \ref{ctdiv}, $j\in C_{T:h}.$
	%Moreover  this implies that $C_T\setminus\{h\}\subseteq C_{T:h},$ hence item 3 is done.
	%let $(i,j)\in S_0:h$ i.e. $j\in C_T\setminus\{h\}\subseteq C_{T:h}.$
	%and consequently $C_{T:h}=C_T\setminus\{h\}.$
	\item [4)] Let $(i,j)\in S_0:h$ and $u\in F_{T:h}(j).$ By Lemma \ref{ctdiv}, $u\in F_T(j),$ so $(i,u)\in S_0.$ Since $i\ne h$ and $u\ne h$ we get 
	$(i,u)\in S_0:h.$
\end{itemize}
\end{proof}

\begin{lem}\label{divm}
Let $U\subseteq C_{2,n}$ and $a\in [n].$	Then $I_{U:a}=I_U:(x_a).$
\end{lem}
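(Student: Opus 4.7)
The plan is to prove the two inclusions $I_{U:a} \subseteq I_U:(x_a)$ and $I_U:(x_a) \subseteq I_{U:a}$, using only the prime decomposition $I_U = \bigcap_{\{i,j\}\in U}(x_i,x_j)$ and basic properties of colon ideals.

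A conceptually cleaner route, which I would follow, is to use the fact that colon commutes with intersection. Since
\[
I_U:(x_a) = \Bigl(\bigcap_{\{i,j\}\in U}(x_i,x_j)\Bigr):(x_a) = \bigcap_{\{i,j\}\in U}\bigl((x_i,x_j):(x_a)\bigr),
\]
the problem reduces to evaluating each factor $(x_i,x_j):(x_a)$. Here there are two cases according to whether $a\in\{i,j\}$ or not. If $a\in\{i,j\}$ then $x_a\in(x_i,x_j)$, so $(x_i,x_j):(x_a)=R$ and this factor contributes nothing to the intersection. If $a\notin\{i,j\}$, then $(x_i,x_j)$ is a prime ideal not containing $x_a$, whence $(x_i,x_j):(x_a)=(x_i,x_j)$. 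Collecting the nontrivial factors gives exactly
\[
I_U:(x_a) = \bigcap_{\{i,j\}\in U,\ a\notin\{i,j\}}(x_i,x_j) = I_{U:a},
\]
by the very definition of $U:a$ in Definition \ref{div}.

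There is really no main obstacle: the argument is essentially a one-line computation once the two easy facts $(x_i,x_j):(x_a)=R$ (when $a\in\{i,j\}$) and $(x_i,x_j):(x_a)=(x_i,x_j)$ (when $a\notin\{i,j\}$) are invoked, together with the well-known distributivity of colon over intersection. If one prefers to avoid citing the distributivity formula, one can run the argument directly as a double inclusion: for $f\in I_{U:a}$ and any $\{i,j\}\in U$, either $a\in\{i,j\}$ so $f x_a\in(x_a)\subseteq(x_i,x_j)$, or $\{i,j\}\in U:a$ so $f\in(x_i,x_j)$ and thus $f x_a\in(x_i,x_j)$; conversely, if $f x_a\in I_U$, then for $\{i,j\}\in U:a$ primality of $(x_i,x_j)$ and $x_a\notin(x_i,x_j)$ force $f\in(x_i,x_j)$. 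Either formulation fits comfortably in a few lines.
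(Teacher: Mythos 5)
Your proof is correct and follows essentially the same route as the paper: distribute the colon over the intersection $I_U=\bigcap_{\{i,j\}\in U}(x_i,x_j)$ and observe that each factor $(x_i,x_j):(x_a)$ is $R$ or $(x_i,x_j)$ according to whether $a\in\{i,j\}$. The paper's proof is exactly this one-line computation, so no further comparison is needed.
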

\begin{proof}
$I_U=\bigcap_{\{i,j\}\in U}(x_i,x_j).$ Then
 $$I_U:(x_a)=\bigcap_{\{i,j\}\in U}\big((x_i,x_j):(x_a)\big)=\bigcap_{\{i,j\}\in U:a}(x_i,x_j)=I_{U:a}.$$
\end{proof}

\begin{lem}\label{vci}
Let $S=T\cup S_0$ be a generalized tower set. For every $a\in\pi_1(S_0),$ there is $(a,h)\in S_0$ such that $I_{S:a}+(x_h)$ is a complete intersection ideal of height $2.$
\end{lem}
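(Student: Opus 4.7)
The plan is to take $h = m := \min\{j \mid (a,j) \in S_0\}$; by Lemma~\ref{minrg}, $F_T(m) = \emptyset$. Setting $G := \varphi(S:a)$, let $N$ denote the set of neighbours of $m$ in $G$ and put $g := \prod_{r \in N} x_r$. Since elements of $\zd$ have distinct coordinates, $m \notin N$, so $x_m$ and $g$ are coprime squarefree monomials, hence they form a regular sequence and $(x_m, g)$ is automatically a complete intersection of height $2$. The goal will therefore be to prove the equality $I_{S:a} + (x_m) = (x_m, g)$.

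The equality will follow from the combinatorial claim that every edge of $G$ is incident to a vertex of $\{m\} \cup N$. Granting the claim, the inclusion $I_{S:a} \subseteq (x_m, g)$ comes from the fact that each minimal monomial generator of $I_{S:a}$ is indexed by a minimal vertex cover $C$ of $G$; if $m \in C$ then the generator lies in $(x_m)$, while otherwise all edges incident to $m$ must be covered through their other endpoints, forcing $C \supseteq N$ and placing the generator in $(g)$. The reverse inclusion holds because, under the claim, $N$ is itself a vertex cover of $G$, so $g \in I_{S:a}$.

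The claim will be proved by a case analysis on an edge $\{p,q\}$ of $G$ with $p, q \neq m$. If $\{p,q\}$ comes from $(p,q) \in T$, I will use the tower property: when $q > m$, or $q < m$ with $T_q = T_m$, one has $p \in T_q \subseteq T_m \subseteq N$; in the remaining case $q < m$ with $T_q \supsetneq T_m$, the hypothesis $F_T(m) = \emptyset$ forces $(m,q) \in T$, hence $q \in T^m \subseteq N$. If $\{p,q\}$ comes from $(p,q) \in S_0$ with $p \neq a$, then item~3 of Definition~\ref{tg} yields $q \in \pi_1(T) \cap \pi_2(T)$; comparing $T_q$ with $T_m$ via the tower property, one sees that either $m$ lies in $F_T(q)$ so that item~4 of Definition~\ref{tg} forces $(p,m) \in S_0$ (whence $p \in N$), or else $(q,m) \in T$ directly (whence $q \in T_m \subseteq N$).

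The main obstacle will be the bookkeeping in the $S_0$-edge case, where the closure condition item~4 of Definition~\ref{tg} must be invoked in concert with the tower ordering to force one of the two endpoints of every edge into $N$; Lemma~\ref{minrg} is the decisive input throughout.
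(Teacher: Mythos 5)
Your opening reduction---that it suffices to find $h$ with $(a,h)\in S_0$ such that the neighbourhood $N$ of $h$ in $G=\varphi(S:a)$ is a vertex cover of $G$---is exactly the paper's first step, and your treatment of the edges coming from $T$ (tower property plus $F_T(m)=\emptyset$ from Lemma~\ref{minrg}) is sound. The gap is in the $S_0$-edge case: the dichotomy ``either $m\in F_T(q)$ or $(q,m)\in T$'' is not exhaustive. Since $T_q$ and $T_m$ are comparable, there remains the possibility $T_q=T_m$ with $(q,m)\notin T$ and $(m,q)\notin T$; then $m\notin F_T(q)$ (membership in $F_T(q)$ requires the \emph{strict} inclusion $T_q\subset T_m$), item~4 of Definition~\ref{tg} gives nothing, and neither endpoint of $\{p,q\}$ need lie in $\{m\}\cup N$. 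This case genuinely occurs, and your choice $h=m=\min S^a$ genuinely fails. Take $T=\{(2,1),(3,1),(4,1),(4,2),(4,3)\}$ and $S_0=\{(5,2),(5,3),(6,3)\}$: one checks that $S=T\cup S_0$ is a generalized tower set ($F_T(2)=F_T(3)=\emptyset$, so items 3 and 4 are immediate, and $S$ is connected). For $a=5$ one has $m=2$ and $N=\{1,4\}$, but the edge $\{3,6\}\in\varphi(S:5)$ meets neither $2$ nor $N$; concretely, $(x_2,x_3,x_6)$ is a minimal prime of $I_{S:5}+(x_2)$ while $(x_1,x_2)$ is another, so this ideal is not even equidimensional, let alone a height-$2$ complete intersection. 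Here $h=3$ works instead.

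This is precisely why the paper does not take $h=\min S^a$: it introduces the set $U(S^a)$ of all $m'\in S^a$ with $T_{m'}=T_{m}$ and $F_T(m')=\emptyset$, and argues by induction on $|U(S^a)|$ that \emph{some} element of $U(S^a)$ (not necessarily the minimum) satisfies the claim. The residual case you are missing is eliminated only by invoking the connectedness of $S$---a hypothesis your argument never uses, and without which the selection of a correct $h$ cannot be made. So the proposal needs both a mechanism for moving $h$ within $U(S^a)$ and an explicit appeal to connectedness; as written, it proves a false statement about $\min S^a$.
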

\begin{proof}
%\begin{itemize} 
We proceed step by step.
\begin{itemize}
	\item[1)] In this first step we show that the assertion is equivalent to prove that for all $\{i,j\}\in \varphi(S:a)$ we have either $\{i,h\}\in \varphi(S:a)$ or $\{j,h\}\in \varphi(S:a).$
	\par
At the beginning we observe that if $I_{S:a}+(x_h)$ is equidimensional of height $2$ and $\prm$ is a minimal prime in its primary decomposition then $x_h\in\prm,$ so $I_{S:a}+(x_h)=\bigcap_i (x_h,x_i)=(x_h,\prod_i x_i)$ that is a complete intersection.
	\par
	On the other hand to show that $I_{S:a}+(x_h)$ is equidimensional of height $2$ it is enough to prove that 
	for all $\{i,j\}\in \varphi(S:a)$ we have either $\{i,h\}\in \varphi(S:a)$ or $\{j,h\}\in \varphi(S:a).$ In fact let $\prm=(x_i,x_j,x_h)$ be a prime ideal containing $I_{S:a}+(x_h),$ so $\{i,j\}\in\varphi(S:a),$ consequently $\{i,h\}\in \varphi(S:a)$ or $\{j,h\}\in \varphi(S:a)$ i.e. $\prm$ contains a prime ideal of height $2$ containing $I_{S:a}+(x_h).$
	\end{itemize}
Now let $a\in\pi_1(S_0),$ and let $m_1=\min S^a;$ we set
$$U(S^a):=\{ m\in S^a \ | \ T_m=T_{m_1} \text{ and }F_T(m)=\emptyset  \}.$$
Note that by Lemma \ref{minrg}, $m_1\in U(S^a).$
\par
We claim that the integer $h$ which we are looking for can be found in $U(S^a).$ In the next two steps we prove properties of $U(S^a)$ for our claim.
\begin{itemize} 
	\item[2)] 
	Let $m, n\in U(S^a)$ then $T^m=T^{n}.$ If $\alpha \in T^m\setminus T^n,$ i.e. $(m, \alpha)\in T$ and $(n, \alpha)\notin T$ we have $T_{\alpha}\supset T_m =T_n.$ Now
since $\alpha \notin F_T(n)=\emptyset$ and $(n, \alpha)\notin T$ we should have $T_{\alpha}\subseteq T_n,$ a contradiction.
	\item[3)]  If $F_T(m)=\emptyset$ and $(\alpha, \beta)\in T$ then either $(\alpha, m)\in T$ or $(m, \beta)\in T.$ Since $\beta\notin F_T(m)=\emptyset$ 
	we get either $T_{\beta}\subseteq T_m,$ hence $(\alpha,m)\in T,$ or $(m,\beta)\in T.$  
\end{itemize}
In the remaining steps we will find the integer $h$ working by induction on $|U(S^a)|.$
\begin{itemize}
\item[4)] If $|U(S^a)|=1$ then $U(S^a)=\{m_1\},$ and we would like to show that for all 
$\{i,j\}\in \varphi(S:a)$ we have either $\{i,m_1\}\in \varphi(S:a)$ or $\{j,m_1\}\in \varphi(S:a).$
If $\{i,j\}\in \varphi(T)$ then by item $3$ we are done. So we can assume that $\{i,j\}\in \varphi(S_0),$ with $(i,j)\in S_0.$ Of course we can suppose that $j\ne m_1.$
%And suppose by contradiction $(\alpha, \beta)\in S_0$ such that $$\{\alpha, m_1\}, \{m_1, \beta\}\notin \varphi(S).$$Since $S$ is connected then taking $(\alpha, \beta)$ and $(a, m_1)$ we have $$ (a,\beta)\in S.$$
Since $j\notin F_T(m_1)$ then either $(m_1,j)\in T$ or $T_j\subseteq T_{m_1}.$ If $(m_1,j)\in T$ we are done; otherwise we can assume that 
$T_j\subseteq T_{m_1}.$

%\begin{itemize}
	%\item[$\bullet$] $T_{\beta}\supseteq T_k$ for some $(m,k)\in T:$ this implies $(m,\beta)\notin S.$
	%\item[$\bullet$] $T_{m_1}\supseteq T_{\beta}:$
%\begin{itemize}
	\item[\bf{-}] if $T_{j}\subset T_{m_1}$ then either $(j,m_1)\in T$ and we are done, or $(j,m_1)\not\in T$ then $m_1\in F_T(j),$ so by item 4 in Definition \ref{tg} we get $(i,m_1)\in S_0$ and we are done again.
%\end{itemize}
	\item[\bf{-}] if $T_{m_1}= T_{j},$ let us suppose that $$\{i,m_1\}, \{m_1,j\}\notin \varphi(S).$$ Since $S$ is connected then, taking $(i,j)$ and $(a, m_1),$ we get $ (a,j)\in S_0.$
	Since $U(S^a)=\{m_1\}$ and $j\neq m_1$ we have $F_T(j)\neq\emptyset.$ Now take $k\in F_T(j);$ then $T_{m_1}=T_{j}\subset T_{k},$ so $k< m_1.$ Since $(a,j)\in S_0$ and $k\in F_T(j)$ by item 4 in Definition \ref{tg}, we have $(a,k) \in S_0,$ and this contradicts the minimality of $m_1.$
\end{itemize}
%\end{itemize}
%\end{itemize}
	 Let now $U(S^a)=\{m_1, \ldots, m_p\},\ p>1.$ 
\begin{itemize}
\item[5)] At first we prove that $U(S^a)\setminus\{m_p\}=U((S:m_p)^a).$
%$=\{m\in (S:m_p)^a\ |\ (T:m_p)_m=(T:m_p)_{m_1} \ \text{and} \ F_{(T:m_p)}(m)=\emptyset \}.$
 The inclusion  $U(S^a)\setminus\{m_p\}\subseteq U((S:m_p)^a)$ follows directly by definition of $U(S^a)$ and by Lemma \ref{ctdiv}.
	 Let $m\in U((S:m_p)^a)$ then $m\ne m_p$ and $(T:m_p)_m=(T:m_p)_{m_1};$ since $T_{m_1}=T_{m_p}$ we get $m_p\not\in T_{m_1}$ therefore $T_m\supseteq T_{m_1}.$
	 By the minimality of $m_1$ we have $T_m=T_{m_1}.$
	 Now if $b\in F_T(m)$ then $T_{m_1}=T_m\subset T_b,$ so $b<m_1.$ Since $S$ is a generalized tower set we get $(a,b)\in S_0$ and this contradicts the minimality of $m_1.$
\item[6)] By the inductive hypothesis there exists $m\in U(S^a)\setminus\{m_p\}$ such that for any $(i,j)\in (S:a):m_p$ we have either 
 $$\{m,i\} \text{ or }\{m,j\}\in\varphi((S:a):m_p).$$
We will prove that either $m_p$ or $m$ is the wanted element. Let us suppose that there exist $(\alpha,\beta),(u, v)\in S$ such that 
 $$\{m,\alpha\},\{m,\beta\}\notin\varphi((S:a)) \text{ and } \{m_p,u\},\{m_p,v\}\notin\varphi((S:a)).$$
Note that $\beta=m_p,$ since otherwise $(\alpha,\beta)\in S:m_p.$ Now since $v\ne m_p$ $(u,v)\in S_0:m_p,$ hence by hypothesis on $m$ it should be either 
$(u,m)\in S_0$ or $\{v,m\}\in\varphi(T);$ but the last assertion is false since, by item $2,$ $T_m=T_{m_p}$ and $T^m=T^{m_p}.$ 
This implies that $(\alpha,m_p)$ and $(u,m)\in S$ and this contradicts the connection of $S.$
	 
%\end{itemize}
\end{itemize}
%Note that it is enough to show that $I_{S:a}+(x_h)$ is equidimensional of height $2.$ Since in this case every minimal prime should be of the type $(x_h,x_i)$ %for some $i\ne h,$ hence $I_{S:a}+(x_h)=(x_h,\prod_i x_i).$ If $\prm\supseteq (I_{S:a}+(x_h))$ is a prime ideal then $\prm$ contains a prime of the type %$(x_i,x_j,x_h)$ with $(i,j)\in S:a.$ If $(i,j)\in T$ then 
\end{proof}

Finally we are ready to prove the announced result.

\begin{thm}\label{gtsacm}
If $S$ is a generalized tower set then $S$ is aCM.
\end{thm}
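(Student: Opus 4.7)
The plan is to proceed by induction on $|S_0|$.  For the base case $|S_0|=0$, we have $S=T$, a tower set; choosing the families of variables $\mathcal{F}_i=\{x_j : j\in\pi_i(T)\}$ in Theorem~\ref{tower-aCM} (the genericity conditions hold trivially since distinct variables form a regular sequence) shows that $I_{\varphi(T)}=I_S$ is aCM, and Proposition~\ref{acmci} then extends aCM-ness to the generic-form ideal of Definition~\ref{gts}.

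For the inductive step with $|S_0|>0$, I would pick $a\in\pi_1(S_0)$ and let $h\in S^a$ be the integer furnished by Lemma~\ref{vci}, so that $I_{S:a}+(x_h)$ is a complete intersection of height two.  Since $a\notin\pi_1(T)\cup\pi_2(T)$ by property~(3) of Definition~\ref{tg}, Lemma~\ref{divm} gives $I_S:x_a=I_{S:a}$, yielding the short exact sequence
$$0\longrightarrow (R/I_{S:a})(-1)\xrightarrow{\;\cdot\, x_a\;}R/I_S\longrightarrow R/\bigl(I_S+(x_a)\bigr)\longrightarrow 0.$$
A direct verification confirms that $S:a=T\cup(S_0:a)$ is again a generalized tower set of strictly smaller~$|S_0|$: properties~(2)--(4) are inherited because $a$ does not meet $T$, and connectedness survives because every removed pair $(a,j)\in S_0$ has $j\in\pi_1(T)\cap\pi_2(T)$, so the surviving pairs of $S_0:a$ remain tied to $T$.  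By the inductive hypothesis, $R/I_{S:a}$ is aCM with $\depth R/I_{S:a}=n-2$.

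By the depth lemma applied to the displayed sequence, it suffices to show that $\depth R/(I_S+(x_a))\ge n-2$.  Setting $\overline R=R/(x_a)$, a direct computation with monomial primes yields $\overline{I_S}=I_{S:a}\cap(F)$, where $F=\prod_{j\in S^a}x_j$.  Writing $F=x_h\cdot F'$ (possible since $h\in S^a$) and using the formula $I_{S:a}\cap(x_h)=x_h\cdot I_{(S:a):h}$, Lemma~\ref{vci} lets us factor
$$\overline{I_S}\;=\;x_h\cdot J,\qquad J\;=\;I_{(S:a):h}\cap (F').$$
The associated short exact sequence
$$0\longrightarrow (\overline R/J)(-1)\xrightarrow{\;\cdot\, x_h\;}\overline R/\overline{I_S}\longrightarrow\overline R/(x_h)\longrightarrow 0$$
reduces the depth estimate to a control of $\overline R/J$.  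Since $(S:a):h$ is a generalized tower set (Lemma~\ref{divtg} applied to $S:a$) of still smaller~$|S_0|$, the inductive hypothesis gives aCM-ness of $R/I_{(S:a):h}$, and a Mayer--Vietoris on $I_{(S:a):h}\cap(F')$ (exploiting that $F'$ is coprime to $x_h$) delivers the bound $\depth\overline R/J\ge n-2$.  Combining the two exact sequences via the depth lemma produces $\depth R/I_S\ge n-2$, so $R/I_S$ is aCM, and Proposition~\ref{acmci} transports the conclusion to the generic-form setting.

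The principal technical obstacle is precisely the depth estimate for $\overline R/(I_S+(x_a))$: a naive Mayer--Vietoris applied directly to the intersection $I_{S:a}\cap(F)$ is insufficient, because the summand $\overline R/I_{S:a}$ has codimension two in $\overline R$ and so contributes depth only $n-3$.  The complete-intersection conclusion of Lemma~\ref{vci} is exactly what lets us strip off the factor $x_h$ and replace the obstructive $I_{S:a}$ by the strictly smaller $I_{(S:a):h}$, converting the problem into one that the inductive hypothesis can handle.
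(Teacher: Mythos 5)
Your setup (induction on the size of $S_0$, passing to $S:a$, invoking Lemma~\ref{vci}) follows the paper, but the inductive step breaks at the very first exact sequence, and the later manipulations do not repair it. In
$$0\to (R/I_{S:a})(-1)\xrightarrow{\;\cdot x_a\;} R/I_S\to R/\bigl(I_S+(x_a)\bigr)\to 0$$
the module $R/I_S$ sits in the \emph{middle}, so the depth lemma only yields $\depth R/I_S\ge\min\{\depth R/I_{S:a},\,\depth R/(I_S+(x_a))\}$; no ``$+1$'' is available for the cokernel. Worse, the goal you reduce to is false in general: $R/(I_S+(x_a))\cong \overline{R}/\bigl(I_{S:a}\cap(F)\bigr)$ with $\overline{R}=R/(x_a)$, and this ring is usually not equidimensional. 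Its minimal primes are the $(x_a,x_j)$ of height $2$ for $j\in S^a$ together with the $(x_a,x_i,x_j)$ of height $3$ for every $\{i,j\}\in\varphi(S:a)$ with $i,j\notin S^a$; as soon as one such pair exists (the generic situation), $\depth R/(I_S+(x_a))\le n-3$ while its dimension is $n-2$. Your factorization $\overline{I_S}=x_h\cdot J$ with $J=I_{(S:a):h}\cap(F')$ has exactly the same defect: $J$ is again the intersection of a codimension-two ideal with a principal one, so a Mayer--Vietoris on $J$ gives at best $\min\{\depth\overline{R}/I_{(S:a):h},\dots\}=n-3$, and the claimed bound $\depth\overline{R}/J\ge n-2$ fails. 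Stripping off $x_h$ replaces $I_{S:a}$ by $I_{(S:a):h}$, which is still of codimension two, so the obstacle you correctly identified at the end of your argument is not removed by your fix.

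The paper avoids this by choosing the other exact sequence attached to the decomposition $I_S=I_{S:a}\cap(x_a,f)$, namely the Mayer--Vietoris sequence $0\to I_S\to I_{S:a}\oplus(x_a,f)\to I_{S:a}+(x_a,f)\to 0$, in which $I_S$ is the \emph{kernel}. There the depth lemma gives $\min\{\text{middle},\,\text{cokernel}+1\}$, and the extra $+1$ is precisely what is needed; the price is that one must control the \emph{sum} $I_{S:a}+(f)$ rather than the intersection $I_{S:a}\cap(f)$, showing $\pd\bigl(R/(I_{S:a}+(f))\bigr)\le 2$. This is where Lemma~\ref{vci} genuinely enters (it asserts that the sum $I_{S:a}+(x_h)$ is a height-two complete intersection), and the general case is handled by an inner induction on $|S^a|$ via the splitting $I_{S:a}+(f)=\bigl((I_{S:a}:(x_h))+(f_h)\bigr)\cap\bigl(I_{S:a}+(x_h)\bigr)$ together with Lemmas~\ref{divtg} and~\ref{divm}. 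To salvage your write-up you would have to replace the multiplication-by-$x_a$ sequence by this Mayer--Vietoris sequence and rebuild the second half of the argument around the sum.
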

\begin{proof}
$S=T\cup S_0,$ where $T$ is a tower set and for $S_0$ the properties of Definition \ref{tg} hold. We proceed by induction on $r=|\pi_1(S_0)|.$
If $S_0=\emptyset$ then $S=T$ which is aCM by Theorem \ref{tower-aCM}. Now we can suppose that the assertion is true up to $r-1.$ Take $a\in\pi_1(S_0).$
Note that $S:a=T\cup(S_0:a)$ and $S:a$ is a generalized tower set with respect to this decomposition. Then by inductive hypothesis $S:a$ is aCM. We can write $I_S=I_{S:a}\cap (x_a,f)$ where $f=\prod_{j\in S^a} x_j.$ Using the exact sequence
 $$0\to I_S \to I_{S:a}\oplus (x_a,f) \to I_{S:a}+(x_a,f) \to 0$$
it is enough to show that $\pd (I_{S:a}+(x_a,f))\le 3,$ i.e. $\pd (I_{S:a}+(f))\le 2.$ To do that we use induction on $\deg f=|S^a|.$
If $f=x_h$ then, by Lemma \ref{vci}, $I_{S:a}+(x_h)$ is a complete intersection ideal of height $2$ and we are done. 

If $\deg f=|S^a|>1,$ by Lemma \ref{vci}, there is $h\in S^a$ such that $I_{S:a}+(x_h)$ is a complete intersection ideal of height $2.$
We can write
 $$I_{S:a}+(f)=\big((I_{S:a}:(x_h))+(f_h)\big)\cap \big(I_{S:a}+(x_h)\big),$$
where $f_h=f/x_h.$ In fact $I_{S:a}+(f)\subseteq\big((I_{S:a}:(x_h))+(f_h)\big)\cap \big(I_{S:a}+(x_h)\big)$ trivially.
Let $g\in \big((I_{S:a}:(x_h))+(f_h)\big)\cap \big(I_{S:a}+(x_h)\big)$ be a monomial. If $g\in I_{S:a}$ we are done, otherwise $g\in (x_h).$ If $g\in (f_h)$ then $g\in (f)$ and we are done again. Otherwise $x_hg\in I_{S:a};$ since $I_{S:a}$ is a monomial squarefree ideal, we get $g\in I_{S:a}.$
\par
By Lemma \ref{divm} we have that $I_{S:a}:(x_h)=I_{(S:h):a}.$ Since by Lemma \ref{divtg} $S:h$ is a generalized tower set, observing that 
$f_h=\prod_{j\in (S:h)^a} x_j,$ we can apply the inductive hypothesis to assert that $$\pd\big((I_{S:a}:(x_h))+(f_h)\big)\le 2.$$ We set 
$J:=(I_{S:a}:(x_h))+(f_h).$ Now let us consider the exact sequence
 $$0\to I_{S:a}+(f) \to J\oplus\big(I_{S:a}+(x_h)\big) \to J+(x_h) \to 0.$$
Since $\pd (J+(x_h))\le 3$ and $I_{S:a}+(x_h)$ is a complete intersection ideal of height $2,$ we can conclude that $\pd(I_{S:a}+(f))\le 2.$
\end{proof}

Now we want to give a converse of the previous theorem. More precisely  we want to prove that every monomial squarefree aCM ideal of height two is supported on a suitable generalized tower set. To do this we introduce some preparatory material.

\begin{dfn}\label{tgs}
Let $U\subseteq C_{2,n}.$ We will say that $U$ is {\em generalized towerizable set} if there exists an ordinante function $\omega:C_{2,n}\to (\zp^2)^*$ and a permutation $\tau$ on $\pi_2(\omega(U))$
such that $\tau\omega(U)$ is a generalized tower set.
\end{dfn}

%Now we want to prove that if $U\subseteq C_{2,n}$ is aCM, then, $U$ is generalized towerizable.
%Now we want to prove that every monomial squarefree aCM ideal of height two is supported on a suitable generalized tower set.
%\begin{thm}
%\label{gts-ci}
%Let $I\subset R$ a monomial squarefree aCM ideal of height $2.$ Then there exists a generalized tower set $S\subset(\zp^2\setminus\Delta)$ and a family of %monomials $h_1,\ldots,h_t,$ such that $I=\bigcap_{(i,j)\in S}(h_i,h_j).$
%\end{thm}
%\begin{proof}
%\end{proof}
%To do this we introduce some preparatory material.

Let $U\subseteq C_{2,n}$ be an aCM set; then, by the Hilbert-Burch theorem, $I_U$ is a determinantal ideal generated by the maximal minors of a matrix of size $(r+1)\times r.$ 
\begin{lem}\label{mat}
If $I\subset R$ is an aCM monomial ideal of height $2$ then it admits a Hilbert-Burch matrix of the form
\begin{tiny}
$$\left( \begin{array}{ccccccccccc}
M_{0,1} & 0 & \ldots &0&&&&&&\ldots&0\\ 
D_1 & M_{1,2} & \ldots & M_{1,\alpha_1} &0&\ldots &&&&&\ldots \\
0 & D_2 & 0 & 0 &M_{2,\alpha_1+1}&\ldots &M_{2,\alpha_2}&&&&\ldots \\
0 & 0 & D_3 & 0 &0& \ldots&0&M_{3,\alpha_2+1}&\ldots&M_{3,\alpha_3}&\ldots \\
  &  &  & \ddots && &&&&&\\
 &  &  &  && &&&&&\\
 &  &  &  && &&&&&\\ 
  &  &  &  && &&&&&\\ 
  &  &  &  && &&&&\ddots&\\ 
0 &\ldots  &  &  && &&&\cdots&0&D_r\\
\end{array} \right)
$$
\end{tiny}where $D_i$ and $M_{ij}$ are monomials, $D_i\ne 0$ and $M_{ij}\ne 0$
and $D_i$ is in the position $(i,i)$ (we enumerate the rows from $0$ to $r$ and the columns from $1$ to $r$).
%We can construct this matrix from any basis of the module of the first sygyzies, where each element plays only on two monomial generators.
\end{lem}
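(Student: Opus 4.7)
The plan is to produce a Hilbert–Burch matrix for $I$ whose columns are binomial monomial syzygies, organise them as the edges of a tree, and then extract the staircase shape via a breadth-first relabelling.

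By the Hilbert–Burch theorem, $I$ admits a minimal free resolution $0 \to F_1 \xrightarrow{M} F_0 \to I \to 0$ with $F_0 = R^{r+1}$ and $F_1 = R^r$, and $I$ is the ideal of maximal minors of $M$. Since $I$ is a monomial ideal, its minimal generators $m_0, \ldots, m_r$ can be chosen to be monomials, and by the standard Taylor/lcm-resolution description of the syzygies of a monomial ideal the minimal first syzygies can be taken as Koszul-type binomials $\sigma_{ij} = (L_{ij}/m_i)\, e_i - (L_{ij}/m_j)\, e_j$, where $L_{ij} = \mathrm{lcm}(m_i, m_j)$. Consequently we may assume that every column of $M$ has exactly two non-zero entries, both monomials, of opposite signs.

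To $M$ we associate a graph $G$ on the vertex set $\{0, 1, \ldots, r\}$ (indexing the rows), whose $r$ edges are the columns of $M$ — each column contributing the edge joining its two non-zero rows. The graph $G$ is connected: otherwise $M$ would split as a block-diagonal matrix, and several of its maximal minors would vanish for purely dimensional reasons (the non-square sub-blocks would force zero determinants), contradicting the requirement that all $r+1$ minors be non-zero and form a minimal generating set of $I$. A connected graph on $r+1$ vertices with $r$ edges is a tree, so $G$ is a tree.

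Finally, pick any leaf of $G$, relabel it as vertex $0$, and perform a breadth-first search from this root, labelling the remaining vertices $1, 2, \ldots, r$ in visitation order; apply the same permutation to the rows of $M$. Each vertex $j \ge 1$ then has a unique parent $p(j) < j$, and the children of any vertex $i$ form a contiguous interval $\{\alpha_{i-1}+1, \ldots, \alpha_i\}$, as is immediate from the BFS enumeration (when vertex $i$ is dequeued its children are enqueued consecutively, before any grandchildren). Reorder the columns of $M$ so that column $j$ is the edge joining $j$ with $p(j)$; after flipping signs along appropriate rows, the entry at position $(j,j)$ becomes a monomial $D_j \neq 0$ and the entry at $(p(j), j)$ becomes a monomial $M_{p(j), j} \neq 0$, with all other entries zero. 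This yields exactly the displayed staircase matrix. The main obstacle is the first step — establishing that the minimal first syzygies of a monomial ideal can be chosen as monomial binomials — which follows from the Taylor-type description of the minimal free resolution of a monomial ideal; the remaining reorganisation is purely combinatorial.
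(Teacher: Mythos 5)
Your proof is correct and follows essentially the same route as the paper's: both rest on the fact that the $r$ minimal first syzygies of a monomial ideal can be chosen as binomial (Koszul) syzygies each supported on two generators, and then order generators and syzygies by a breadth-first traversal starting from a generator on which only one syzygy acts. Your write-up is in fact somewhat more complete, since you make explicit the tree structure (connectivity via non-vanishing of the maximal minors, hence $r$ edges on $r+1$ vertices form a tree) that the paper's argument leaves implicit.
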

\begin{proof}
We take the minimal monomial set $G$ of generators for $I,$ then the first syzygy module is minimally generated by a set $\Phi$ of $r$ elements acting each only on two of such generators. Moreover there are at least two generators in $G$ on which only one syzygy acts. Let $f_0$ be one of these generators and let $\phi_1$ be the syzygy acting on $f_0$ and let $f_1$ be the other generator on which acts $\phi_1.$ Now we call $\phi_2,\ldots,\phi_{\alpha_1}$ all the other syzygies in $\Phi$ acting respectively on $f_1$ and $f_2,\ldots,f_{\alpha_1}\in G.$ By iterating this procedure we get our matrix.
\end{proof}

\begin{dfn}\label{dfst}
An Hilbert-Burch matrix of the type as in Lemma \ref{mat} will be called a matrix of  {\em standard form}.
\end{dfn}

\begin{dfn}
Let $\mb=(m_{ij})$ be an Hilbert-Burch matrix of {\em standard form} of size $(r+1)\times r.$ Let $\sigma:[r] \to \{0,\ldots,r-1\}$ be the application such that $\sigma(j)$ is the only integer less than $j$ such that $m_{\sigma(j)j}\ne 0.$ 
\end{dfn}

From now on we set $M_i$ for $M_{\sigma(i),i}.$

%\begin{rem}\label{HBsq} Let $I$ be a squarefree monomial ideal of height $2$ generated by $\mathcal{G}(I)=\{f_0, \ldots, f_n\}.$ If $HB_I$ is a Hilbert-Burch %matrix for $I$ of size $n+1\times n$ which has monomial entries then $HB_I$ has squarefree monomial entries. It follows just observing that each determinant %obtained  by $HB_I$ removing a row belong to $\mathcal{G}(I).$
%\end{rem}

%We will say  that $HB_I,$ as described in Lemma \ref{mat},  has {\em standard form}. 

\begin{rem}\label{I_{sigma}} Note that $\sigma(1)=0,$ $\sigma(2)=1$ and, for $j>2,$ $\sigma(j)\ge\sigma(j-1)>0.$ %Therefore,  the function $\sigma$ also defines the ideal $I_{\sigma},$ who arises from the correspondent Hilbert-Burch matrix. 
\end{rem}

Given $j\in \{1, \ldots, r\}$ we denote with  $m(j)$ the set
$$m(j):=\{j,\sigma(j),\sigma^2(j),\ldots,\sigma^h(j)\},$$ where $h$ is the only integer such that $\sigma^h(j)=1.$ 
We write $u\not\in m(j)$ to mean $u\in [n]\setminus m(j).$ 
%$$d(j):=\{1, \ldots, r\}\setminus m(j).$$

\begin{rem}\label{mj}
Note that if $i\in m(j)$ then $m(i)\subseteq m(j).$
\end{rem}

We denote by $f_i$ the determinant of the matrix obtained by removing the row $i$ for $0\le i\le r.$ By the Hilbert-Burch theorem we have that
$$\{f_0,\ldots, f_r\}$$
is a minimal set of generators for $I.$
Note that $f_0=D_0\cdots D_r.$  In the following proposition will compute all the other generators.

\begin{prp}\label{HBgen} 
For any $i\in\{1, \ldots, r\},$  with the above notation, we have
$$f_i=\prod_{j\in m(i)} M_j \cdot \prod_{j\not\in m(i)} D_j.$$
%So we can compute $F_r$ just looking at $m(r),$ for any $r\in\{1, \ldots, n\}.$
\end{prp}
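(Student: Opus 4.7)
My plan is to prove the formula by induction on $|m(i)|$, exploiting the very restrictive shape of a matrix in standard form: each column $j\in\{1,\ldots,r\}$ contains exactly two nonzero entries, namely $M_j$ at row $\sigma(j)$ and $D_j$ at row $j$. For the base case $|m(i)|=1$, which forces $i=1$ and $\sigma(1)=0$, I would delete row $1$ and expand along column $1$; that column then contains the single nonzero entry $M_1$ in row $0$, and the remaining square minor on rows $\{2,\ldots,r\}$ and columns $\{2,\ldots,r\}$ is upper triangular by the standard form, with diagonal $D_2,\ldots,D_r$, giving $f_1=\pm M_1 D_2\cdots D_r$ as required.

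For the inductive step, suppose the formula holds whenever $|m(\cdot)|<|m(i)|$, and set $i':=\sigma(i)\in\{1,\ldots,i-1\}$. Deleting row $i$ leaves only $M_i$ in column $i$ (at row $i'$), so cofactor expansion along that column yields $f_i=\pm M_i\,g$, where $g$ is the determinant of the square minor of the original matrix on rows $\{0,\ldots,r\}\setminus\{i,i'\}$ and columns $\{1,\ldots,r\}\setminus\{i\}$. Running the same argument with row $i'$ deleted instead of row $i$, column $i$ now contains only $D_i$ at row $i$, so $f_{i'}=\pm D_i\,g$ with the \emph{same} $g$. Combining the two cofactor expansions gives the key recursion $D_i\, f_i=\pm M_i\, f_{i'}$ inside the polynomial ring.

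To conclude, I would apply the inductive hypothesis $f_{i'}=\prod_{j\in m(i')}M_j\prod_{j\notin m(i')}D_j$ together with the observation that $i\notin m(i')$, since every element of $m(i')$ is at most $i'=\sigma(i)<i$ (cf.\ Remark \ref{mj}). Hence $m(i)=\{i\}\sqcup m(i')$ and $D_i$ appears as a factor of $f_{i'}$, so cancelling $D_i$ in the recursion rearranges to $f_i=\pm\prod_{j\in m(i)}M_j\prod_{j\notin m(i)}D_j$, matching the claim.

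The step I expect to be most delicate is the sign bookkeeping in the two cofactor expansions, but since the proposition is used only to describe a set of generators of $I$ (so equalities matter up to units, i.e., up to sign), this causes no real problem. A secondary point worth verifying explicitly is that column $i$ really does reduce to a single nonzero entry after each of the two row deletions: this is exactly what the two-nonzero-entries-per-column feature of Definition \ref{dfst} guarantees, and it is precisely the structural property that makes the recursion $D_i f_i=\pm M_i f_{i'}$ available in the first place.
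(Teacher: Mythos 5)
Your argument is correct, and it is organized differently from the paper's. The paper computes $f_i$ by a single iterated Laplace expansion: it expands the minor along column $i$ to peel off $M_i$, then along column $\sigma(i)$ to peel off $M_{\sigma(i)}$, and so on through $m(i)$, and finally observes that the residual matrix (rows $\sigma(j)$ and columns $j$ deleted for all $j\in m(i)$) is upper triangular with diagonal $\prod_{j\notin m(i)}D_j$. You instead set up an induction on $|m(i)|$ whose engine is the relation $D_i f_i=\pm M_i f_{\sigma(i)}$, obtained by expanding the \emph{same} $(r-1)\times(r-1)$ minor (rows with $i$ and $\sigma(i)$ removed, column $i$ removed) in two ways; both expansions are legitimate because, as you note, column $i$ of a matrix in standard form has exactly its two prescribed nonzero entries $D_i$ and $M_i=M_{\sigma(i),i}$. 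The division by $D_i$ at the end is harmless: $R$ is a domain and the inductive hypothesis exhibits $D_i$ as a literal factor of $f_{\sigma(i)}$ because $i\notin m(\sigma(i))$ (every element of $m(\sigma(i))$ is $\le\sigma(i)<i$), so $m(i)=\{i\}\sqcup m(\sigma(i))$ and the bookkeeping closes. What each approach buys: the paper's version is self-contained for each $i$ and makes the triangular residual matrix explicit once; yours confines the triangularity argument to the base case $i=1$ and replaces the iteration by a clean recursion among the generators $f_i$, at the cost of carrying an undetermined sign. That sign is present (silently) in the paper's expansion as well, and since the $f_i$ are only ever used as generators of $I$, i.e.\ up to units, it is immaterial in both treatments.
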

\begin{proof} Let $i\in\{1, \ldots, r\},$ and let $H$ be the square matrix given by $\mb$ without the row containing $D_i.$ Since $M_{i}$ is the only entry in the $i$-th column of $H,$  we compute the determinant $f_i$ by using the Laplace expansion along its $i$-th column. Thus $f_i=M_iG_1,$ where $G_1$ is the determinant of  the matrix $H_1$ obtained from $H$ by deleting the row $\sigma(i)$ and the $i$-th column. Note that $M_{\sigma(i)}$ is the only entry in the $\sigma(i)$-th column of $H_1,$ hence $f_i=M_iM_{\sigma(i)}G_2,$ where $G_2$ is the determinant of  the matrix $H_2$ obtained from $H_1$ by deleting the row 
$\sigma^2(i)$ and the $\sigma(i)$-th column.
So, by iterating this computation, we get $f_i=\prod_{j\in m(i)} M_j \cdot G',$ where $G'$ is the determinant of  the matrix $H'$ obtained from $H$ by deleting the rows $\sigma(j)$ and the columns $j,$ for all $j\in m(i)$.
Finally, we observe that $H'$ is an upper triangular matrix, therefore $G'=\prod_{j\not\in m(i)} D_j.$
\end{proof}

%\begin{dfn}
%Let $\mb$ an Hilbert-Burch matrix of {\em standard form} of size $(r+1)\times r$ and let $I(\mb)$ the ideal generated by the maximal minors of $\mb.$ We set %$h_i=D_i,$ for $1\le i\le r$ and $h_{r+i}=M_i,$ for $1\le i\le r.$
% $$U_{\mb}=\{\{i,j\}\in C_{2,2r}\mid I(\mb)\subseteq (h_i,h_j)\}.$$
%\end{dfn}

%\begin{prp}(sbagliata)
%If $U_{\mb}$ is generalized towerizable then $U$ is generalized towerizable too.
%\end{prp}
%\begin{proof}
%Since $U_{\mb}$ is generalized towerizable there exists a permutation $\sigma$ on $\{1,\ldots,n\}$ and an ordinante function 
%$\omega:C_{2,n}\to (\zc)^*$ such that $S_{\mb}=\omega(\sigma(U_{\mb}))$ is a generalized tower set. Therefore we can write $S_{\mb}=T_{\mb}\cup (S_0)_{\mb}.$

%If $(i,j)\in S_{\mb}$ then there are entries in $\mb,$ $H_i$ and $H_j$ such that $I(\mb)\subseteq (H_i,H_j).$ Note that if $i,h\in\pi_1(S_{\mb}),$ $i\ne h,$ %then
%$\gcd(H_i,H_h)=1.$ It depends on the fact that $I(\mb)$ is a squarefree monomial ideal 
%\end{proof}

Now we want to construct a generalized tower set starting from a n Hilbert-Burch matrix of standard form $\mb.$

\begin{dfn}
We define 
 $$U'_{\mb}:=\{\{i,j\}\in C_{2,2r}\mid i<j\le r \text{ and } i\not\in m(j)\}$$
and
 $$U''_{\mb}:=\{\{i,j\}\in C_{2,2r}\mid i\le r<j \text{ and } j-r\in m(i)\}.$$
Finally we set $$U_{\mb}:=U'_{\mb}\cup U''_{\mb}.$$
%We define $$U_{\mb}=\{\{i,j\}\in C_{2,2r}\mid i<j\le r \text{ and } i\not\in m(j)\}\cup$$
%$$\cup\{\{i,j\}\in C_{2,2r}\mid i\le r<j \text{ and } j-r\in m(i)\}.$$
\end{dfn}

\begin{prp}\label{mdcfr}
\begin{itemize}
\item[1)] If $\{u,v\}\in U'_{\mb}$ then either $u\not\in m(i)$ or $v\not\in m(i)$ for every $1\le i\le r.$
\item[2)] If $\{u,v\}\in U''_{\mb},$ with $u<v,$ then either $u\not\in m(i)$ or $v-r\in m(i)$ for every $1\le i\le r.$
\end{itemize}
\end{prp}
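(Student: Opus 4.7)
The plan is to deduce both items from the single structural observation that $m(j)$ is a totally ordered chain under $\sigma$ and that this chain structure is inherited by successor sets, as recorded in Remark \ref{mj}. Concretely, since $\sigma(k)<k$ for every $k>0$, the set $m(j)=\{j,\sigma(j),\sigma^2(j),\ldots,1\}$ is a strictly decreasing sequence; so if two indices both lie in $m(i)$, the smaller one lies in $m$ of the larger one.

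For item 1, I would argue by contradiction: assume $\{u,v\}\in U'_{\mb}$ (so $u<v\le r$ and $u\notin m(v)$) yet $u,v\in m(i)$ for some $i$. Writing $v=\sigma^s(i)$ and $u=\sigma^t(i)$, the inequality $u<v$ forces $t>s$, whence $u=\sigma^{t-s}(v)\in m(v)$, contradicting the defining condition of $U'_{\mb}$.

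For item 2, I would just apply Remark \ref{mj} directly. If $\{u,v\}\in U''_{\mb}$ with $u<v$, then by the definition of $U''_{\mb}$ one has $u\le r<v$ and $v-r\in m(u)$. Assume $u\in m(i)$; then $m(u)\subseteq m(i)$, so $v-r\in m(u)\subseteq m(i)$, giving the required dichotomy.

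There is no real obstacle here: the proposition is essentially a transitivity statement about the chains $m(\cdot)$, and everything reduces to Remark \ref{mj} together with the elementary fact that $\sigma$ is strictly decreasing. The only mild care is, in item 1, to make sure that the two indices picked from $m(i)$ do lie on the same $\sigma$-chain in the correct order, which is immediate from the definition of $m(i)$.
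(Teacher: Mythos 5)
Your proposal is correct and follows essentially the same route as the paper: item 1 is the same computation $u=\sigma^{t-s}(v)\in m(v)$ (stated there in contrapositive form rather than by contradiction), and item 2 is the same one-line application of $m(u)\subseteq m(i)$ from Remark \ref{mj}.
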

\begin{proof}
\begin{itemize}
\item[1)] Let $u,v\in m(i)$ with $u<v.$ Then $v=\sigma^h(i)$ and $u=\sigma^k(i)$ with $h<k.$ Then $u=\sigma^{k-h}\sigma^h(i)=\sigma^{k-h}(v),$ i.e. $u\in m(v),$ hence $\{u,v\}\not\in U'_{\mb}.$
\item[2)] Let $\{u,v\}\in U''_{\mb},$ with $u<v,$ such that $u\in m(i).$ Since $v-r\in m(u)$ we get $v-r\in m(i).$
\end{itemize}
\end{proof}

\begin{prp}\label{conn}
$U_{\mb}$ is connected.
\end{prp}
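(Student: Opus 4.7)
The plan is to prove connectedness by producing, for any two pairs $A, B \in U_{\mb}$, a single $C \in U_{\mb}$ meeting both. Writing $A = \{a_1, a_2\}$ with $a_1 < a_2$ and $B = \{b_1, b_2\}$ with $b_1 < b_2$, the starting observation is that in both $U'_{\mb}$ and $U''_{\mb}$ the smaller index is at most $r$, so $a_1, b_1 \le r$ and the sets $m(a_1), m(b_1)$ are defined. Two structural facts will be used throughout: by Remark~\ref{mj}, $i \in m(j)$ implies $m(i) \subseteq m(j)$; and $m(i)$ is a strictly decreasing chain in the $\sigma$-orbit, since $\sigma(j) < j$ for $j \ge 2$.

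Up to swapping $A$ and $B$, assume $a_1 \le b_1$. The case $a_1 = b_1$ is immediate with $C = A$, so assume $a_1 < b_1$. The first attempt is $C = \{a_1, b_1\}$, which belongs to $U'_{\mb}$ precisely when $a_1 \notin m(b_1)$; if that holds, we are done. Otherwise $a_1 \in m(b_1)$ and $m(a_1) \subseteq m(b_1)$. When $A \in U''_{\mb}$, so that $a_2 - r \in m(a_1) \subseteq m(b_1)$, the set $C = \{b_1, a_2\}$ lies in $U''_{\mb}$ and meets $A$ at $a_2$ and $B$ at $b_1$, irrespective of $B$'s type.

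The only remaining configurations are those with $A \in U'_{\mb}$, so $a_2 \le r$ and $a_1 \notin m(a_2)$; here the substitute candidates are $\{a_2, b_1\}$ if $a_2 < b_1$ or $\{b_1, a_2\}$ if $a_2 > b_1$, the case $a_2 = b_1$ being handled by $C = A$. The main obstacle—and the only place the argument needs a genuine idea—is showing these candidates actually lie in $U'_{\mb}$, since a priori the relevant $m$-memberships could fail. I expect to resolve this by the chain structure: if $a_2 < b_1$ and $a_2 \in m(b_1)$, then $a_1, a_2 \in m(b_1)$ with $a_1 < a_2$ force $a_1 = \sigma^k(b_1)$ and $a_2 = \sigma^{\ell}(b_1)$ with $\ell < k$, hence $a_1 = \sigma^{k-\ell}(a_2) \in m(a_2)$, contradicting $A \in U'_{\mb}$; symmetrically, if $a_2 > b_1$ and $b_1 \in m(a_2)$, then $a_1 \in m(b_1) \subseteq m(a_2)$ yields the same contradiction. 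These contradictions exclude the pathological sub-cases and close the proof.
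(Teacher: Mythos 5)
Your argument is correct and follows essentially the same route as the paper: first try the pair of the two smaller indices (works when $a_1\notin m(b_1)$), and otherwise use $a_1\in m(b_1)$ together with the chain structure of $m(\cdot)$ to show that the pair formed by $a_2$ and $b_1$ lies in $U'_{\mb}$ or $U''_{\mb}$ according to the type of $A$. The only cosmetic difference is that you reprove the exclusion ``$u,v\in m(k)$ is impossible for $\{u,v\}\in U'_{\mb}$'' inline, where the paper cites Proposition \ref{mdcfr}.
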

\begin{proof} 
Let $\{i,j\}, \{u,v\}\in U_{\mb},$ with $i<j,$ $u<v$ and $u<i.$ 
If $u \not\in m(i)$ then $u<i\le r,$ so $\{u,i\}\in U'_{\mb}.$ If $u\in m(i),$ we have to consider two cases.
\par
%\begin{itemize}
If	$v\le r$  then $\{u,v\}\in U'_{\mb},$ hence $u\not\in m(v).$ Since $u\in m(i)$ we get $m(i)\not\subseteq m(v)$ i.e. $i\not\in m(v).$ Moreover, by Proposition \ref{mdcfr} we have $v\not\in m(i)$ and so $\{i,v\}\in U'_{\mb}.$ 
\par
If  $v>r$ then, by Proposition \ref{mdcfr}, we get $v-r\in m(i),$ so $\{i, v\}\in U''_{\mb}.$  
%\end{itemize}
\end{proof}

Now we set ${\mathcal T}:=U'_{\mb}\cup \{\{i,j\}\in U''_{\mb}\mid j-r\in m(r)\}$ and ${\mathcal S}_0:=U_{\mb}\setminus{\mathcal T},$ so
${\mathcal S}_0=\{\{i,j\}\in C_{2,2r}\mid i\le r<j \text{ and }j-r\in m(i)\setminus m(r)\}.$
%For every $\{i,j\}\in U''_{\mb},$ with $i>j,$ we set $\omega(\{i,j\})=(i,j).$
%We set ${S}_0=\{(i,j)\mid \{i,j\}\in{\mathcal S}_0, \text{ and } i>j\}.$
\par
%Let $\{i,j\}\in{\mathcal T}.$ If $\{i,j\}\in U''_{\mb},$ with $i>j.$ Then we set $\omega(\{i,j\})=(i,j).$
For every $i\in[r],$ we set $\mu_i:=\max(m(i)\cap m(r)).$ 
\par
%Now if $\{i,j\}\in U'_{\mb}$ and $\mu_i<\mu_j$ then we set $\omega(\{i,j\})=(i,j).$
%\par
%Now let $\{i,j\}\in U'_{\mb}$ with $\mu:=\mu_i=\mu_j.$ 
We set 
$${\mathcal V}_i^{(1)}:=\{\{u,v\}\in U'_{\mb}\mid \mu_u=\mu_v=\mu_i\}$$
and
$$r_i^{(1)}:=\max\bigcup_{\alpha\in {\mathcal V}_i^{(1)}}{\alpha}\cup\{0\}.$$
%$$r_1=\max\{u\mid \{u,v\}\in U'_{\mb},\,\mu_u=\mu_v=\mu\}.$$
Note that $r_i^{(1)}<r.$
Moreover, we set 
$$\mu^{(1)}_i:=\begin{cases}\max(m(i)\cap m(r_i^{(1)})) & \text{if }r_i^{(1)}>0 \\ 0 & \text{otherwise}\end{cases}.$$
%If  $\mu^{(1)}_i\ne \mu^{(1)}_j$ then we set 
% $$\omega(\{i,j\})=\begin{cases}(i,j) \text{ if } \mu^{(1)}_i< \mu^{(1)}_j \\ (j,i) \text{ if }\mu^{(1)}_i> \mu^{(1)}_j\end{cases}.$$
Now by induction let us suppose that we defined $\mu^{(k)}_i$ for $1\le k<h.$ Then we set 
$${\mathcal V}_i^{(h)}:=\{\{u,v\}\in{\mathcal V}_i^{(h-1)}\mid \mu^{(h-1)}_u=\mu^{(h-1)}_v=\mu^{(h-1)}_i\}$$
and
$$r_i^{(h)}:=\max\bigcup_{\alpha\in {\mathcal V}_i^{(h)}}{\alpha}\cup\{0\}.$$
Note that if $r_i^{(h-1)}>0$ then $r_i^{(h)}<r_i^{(h-1)}.$ If $r_i^{(h-1)}=0$ then $r_i^{(h)}=0.$
Moreover we set 
 $$\mu_i^{(h)}:=\begin{cases}\max(m(i)\cap m(r_i^{(h)}))& \text{if } r_i^{(h)}>0 \\ 0 & \text{otherwise}\end{cases}.$$
In the sequel we will set $\mu_i^{(0)}:=\mu_i.$ 
Note that by definition the sequence $(\mu_i^{(h)})_{h\ge 0}$ vanishes definitively. 

\begin{lem}\label{seq}
If $\{i,j\}\in U'_{\mathcal M}$ then there exists $h$ such that $\mu_i^{(h)}\ne\mu_j^{(h)}.$
\end{lem}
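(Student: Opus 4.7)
The plan is to argue by contradiction, exploiting the fact (noted in the text just after the definition of $r_i^{(h)}$) that once $r_i^{(h-1)} > 0$ we must have $r_i^{(h)} < r_i^{(h-1)}$, so the sequence $(r_i^{(h)})_{h \ge 0}$ of non-negative integers strictly decreases until it reaches $0$. Suppose, toward a contradiction, that $\mu_i^{(h)} = \mu_j^{(h)}$ for every $h \ge 0$. The strategy is to show that the pair $\{i,j\}$ keeps witnessing $r_i^{(h)} \ge j$ forever, which is incompatible with the strict decrease.

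More precisely, I would prove by induction on $h \ge 1$ that $\{i,j\} \in {\mathcal V}_i^{(h)}$. For the base case $h=1$, we use $\{i,j\} \in U'_{\mb}$ together with the assumption $\mu_i = \mu_i^{(0)} = \mu_j^{(0)} = \mu_j$, which is precisely the defining condition of ${\mathcal V}_i^{(1)}$. For the inductive step, the definition of ${\mathcal V}_i^{(h)}$ requires $\{i,j\} \in {\mathcal V}_i^{(h-1)}$ (the inductive hypothesis) together with $\mu_i^{(h-1)} = \mu_j^{(h-1)}$, which is exactly our standing assumption for level $h-1$. Hence $\{i,j\} \in {\mathcal V}_i^{(h)}$ for every $h \ge 1$.

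From $\{i,j\} \in {\mathcal V}_i^{(h)}$, the definition of $r_i^{(h)}$ as the maximum of the elements occurring in pairs of ${\mathcal V}_i^{(h)}$ gives $r_i^{(h)} \ge \max(i,j) = j > 0$ for every $h \ge 1$. But, as recalled, $r_i^{(h-1)} > 0$ forces $r_i^{(h)} < r_i^{(h-1)}$, so starting from $r_i^{(1)}$ the sequence must strictly descend and eventually hit $0$. This contradicts the uniform lower bound $r_i^{(h)} \ge j$ and completes the argument.

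The only subtle point is the base case $\mu_i = \mu_j$: it is an instance of our assumption at $h = 0$, but it requires noting that the sequence is indexed from $h = 0$ (where $\mu_i^{(0)} := \mu_i$) so that the hypothesis $\mu_i^{(h)} = \mu_j^{(h)}$ for all $h$ is genuinely used at level $0$ to place $\{i,j\}$ in ${\mathcal V}_i^{(1)}$. I expect no other obstacle; everything else is bookkeeping on the nested definitions of ${\mathcal V}_i^{(h)}$, $r_i^{(h)}$, and $\mu_i^{(h)}$, combined with the strict-decrease observation already recorded in the paper.
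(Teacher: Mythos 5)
Your proof is correct and follows essentially the same route as the paper: assume $\mu_i^{(h)}=\mu_j^{(h)}$ for all $h$, observe that the pair $\{i,j\}$ then survives in every ${\mathcal V}_i^{(h)}$, and derive a contradiction with the recorded termination property of the construction. The only cosmetic difference is that you land the contradiction on the strict decrease of $r_i^{(h)}$ while the paper invokes the equivalent fact that $(\mu_i^{(h)})_{h\ge 0}$ vanishes definitively.
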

\begin{proof}
If $\mu_i^{(h)}=\mu_j^{(h)}$ for every $h$ then ${\mathcal V}_i^{(h)}={\mathcal V}_j^{(h)}\ne\emptyset$ for every $h.$ 
Consequently $(\mu_i^{(h)})_{h\ge 0}$ should not be definitively null, a contradiction.
\end{proof}

For every $\{i,j\}\in U''_{\mb},$ with $i>j,$ we set $\omega(\{i,j\}):=(i,j).$ 
\par
For every $\{i,j\}\in U'_{\mb},$ let $t$ be the smallest integer such that $\mu^{(t)}_i\ne \mu^{(t)}_j$ (see Lemma \ref{seq}).
%we set $\omega(\{i,j\})=(i,j).$ 
Then we set 
 $$\omega(\{i,j\}):=\begin{cases}(i,j) \text{ if } \mu^{(t)}_i< \mu^{(t)}_j \\ (j,i) \text{ if }\mu^{(t)}_i> \mu^{(t)}_j\end{cases}.$$
%If $\{i,j\}\in U'_{\mb}.$
Now we set $\overline{U}:=\omega(U_{\mb})\subset\zd.$ 
\par
Observe that for $1\le i\le r,$ $(r+1,i)\in\overline{U},$ hence $\pi_2(\overline{U})=[r].$

\begin{lem}\label{maggr}
Let $i,j\in [r],$ with $\mu_i\le\mu_j.$ Let $h>r$ be such that $(h,i)\in\omega({\mathcal T}).$ Then $(h,j)\in\omega({\mathcal T}).$
\end{lem}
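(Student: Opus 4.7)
The plan is to translate the hypothesis $(h,i)\in\omega(\mathcal{T})$ into concrete chain-membership conditions on $m(i),m(j),m(r)$, and then verify the analogous conditions for $(h,j)$. Since $h>r$, the pair $\{h,i\}$ cannot lie in $U'_{\mathcal{M}}$ (which requires both coordinates $\le r$), so it must lie in $U''_{\mathcal{M}}$. Because $\omega$ places the larger coordinate first on $U''_{\mathcal{M}}$-pairs, the membership $(h,i)\in\omega(\mathcal{T})$ unpacks to the three conditions: $i\le r<h$, $h-r\in m(i)$, and $h-r\in m(r)$.

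Setting $k:=h-r$, the goal reduces to proving $k\in m(j)$. Indeed, once this is shown, $\{h,j\}\in U''_{\mathcal{M}}$ (from $j\le r<h$ and $k\in m(j)$); combining with $k\in m(r)$ yields $\{h,j\}\in\mathcal{T}$; finally $\omega(\{h,j\})=(h,j)$ since $h>j$, giving the desired conclusion.

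For the key inclusion $k\in m(j)$, the plan is to use twice the fact that each $m(\ell)=\{\ell,\sigma(\ell),\sigma^2(\ell),\ldots,1\}$ is a strictly decreasing chain (since $\sigma(t)<t$ for every $t\ge 1$), combined with Remark \ref{mj} which states that $a\in m(b)$ implies $m(a)\subseteq m(b)$. The first application: both $k$ and $\mu_i$ lie in $m(i)$ with $k\le\mu_i\le\mu_j$, so by comparability in this chain $k=\sigma^p(\mu_i)$ for some $p\ge 0$, hence $k\in m(\mu_i)$. The second application: $\mu_i,\mu_j\in m(r)$ with $\mu_i\le\mu_j$, so similarly $\mu_i\in m(\mu_j)$. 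Since $\mu_j\in m(j)$, Remark \ref{mj} yields $m(\mu_j)\subseteq m(j)$, hence $\mu_i\in m(j)$, and applying Remark \ref{mj} once more gives $m(\mu_i)\subseteq m(j)$, so $k\in m(j)$.

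I expect essentially no obstacle: the entire argument is a formal unwinding of definitions. The only mildly subtle observation is that two elements of a given $m$-set which are comparable in $\mathbb{Z}$ are automatically comparable under $\sigma$-iteration, a consequence of the strict monotonicity $\sigma(t)<t$.
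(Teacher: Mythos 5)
Your proof is correct and follows essentially the same route as the paper's: unpack $(h,i)\in\omega(\mathcal T)$ into $h-r\in m(i)\cap m(r)$, use $\mu_i\le\mu_j$ to transfer this membership to $m(j)\cap m(r)$, and repack. The only difference is that the paper asserts the implication $\mu_i\le\mu_j\Rightarrow m(i)\cap m(r)\subseteq m(j)\cap m(r)$ without justification, whereas you supply the (correct) justification via the chain structure of the $m$-sets, i.e.\ $m(i)\cap m(r)=m(\mu_i)\subseteq m(\mu_j)=m(j)\cap m(r)$.
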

\begin{proof}
Since $\{h,i\}\in{\mathcal T}$ and $h>r$ we get $h-r\in m(i)\cap m(r).$ On the other hand, $\mu_i\le\mu_j$ implies that $m(i)\cap m(r)\subseteq m(j)\cap m(r).$ 
Therefore $h-r\in m(j)\cap m(r),$ so $(h,j)\in\omega({\mathcal T}).$
\end{proof}

\begin{thm}\label{um}
$U_{\mb}$ is a generalized towerizable set.
\end{thm}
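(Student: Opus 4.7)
The plan is to use the ordinante $\omega$ already constructed before the theorem, supplement it with a suitable permutation $\tau$ of $\pi_2(\omega(U_\mb))\subseteq[r]$, and then verify the four conditions of Definition~\ref{tg} for the decomposition $\tau\omega(U_\mb)=T\cup S_0$, where $T:=\tau\omega(\mathcal{T})$ and $S_0:=\tau\omega(\mathcal{S}_0)$.

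I define $\tau$ via a total order $\prec$ on $\pi_2(\omega(U_\mb))$: set $a\prec b$ iff the first level $t\ge 0$ at which $\mu^{(t)}_a\ne\mu^{(t)}_b$ satisfies $\mu^{(t)}_a>\mu^{(t)}_b$ (lexicographic comparison of the sequences, with larger values ranked earlier). An iterated use of Lemma~\ref{seq} ensures $\prec$ is a total order on $\pi_2(\omega(U_\mb))$, and $\tau$ is taken to be the unique permutation sending the $k$-th element under $\prec$ to $k$.

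Conditions (1) and (3) of Definition~\ref{tg} are the more direct ones. Connectedness follows, through the bijectivity of $\tau\omega$, from Proposition~\ref{conn}. For condition (3), a pair $(i,j)\in S_0$ comes from some $\{u,v\}\in\mathcal{S}_0$ with $u\le r<v$ and $v-r\in m(u)\setminus m(r)$, so $i=v>r$; since $\pi_2(T)\subseteq\tau([r])$ and the first coordinates of $T$ exceeding $r$ are exactly those of the form $k+r$ with $k\in m(r)$, we get $i\notin\pi_1(T)\cup\pi_2(T)$. On the other hand, $u\notin m(r)$ gives $\{u,r\}\in U'_\mb$ and $\mu_u<\mu_r=r$, whence $\omega(\{u,r\})=(u,r)\in\omega(\mathcal{T})$; together with $\{u,r+1\}\in\mathcal{T}\cap U''_\mb$ (since $1\in m(u)\cap m(r)$), this places $j=\tau(u)$ in both $\pi_1(T)$ and $\pi_2(T)$.

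Conditions (2) and (4) form the technical heart. For condition (2), given $a<b$ in $\pi_2(T)$ (in the new order, i.e.\ $\tau^{-1}(a)\prec\tau^{-1}(b)$) with $T_a\ne\emptyset$, I split $T_b$ into a ``$U''$-part''---which above $i\in[r]$ equals $\{k+r\mid k\in m(\mu_i)\}$---and a ``$U'$-part''. For the former, $\mu_{\tau^{-1}(a)}\ge\mu_{\tau^{-1}(b)}$ by construction of $\prec$, and Lemma~\ref{maggr} delivers the required inclusion. For the latter, given an edge $(x,b)\in T$ originating from $U'_\mb$, the definition of $\omega$ forces $\mu^{(t)}_x<\mu^{(t)}_{\tau^{-1}(b)}$ at their first disagreement; an induction on the level at which $\tau^{-1}(a)$ and $\tau^{-1}(b)$ first diverge, exploiting the recursive definition of $\mu^{(\cdot)}$ via the sets $\mathcal{V}_i^{(h)}$, forces $\omega(\{x,\tau^{-1}(a)\})=(x,\tau^{-1}(a))$ and hence $(x,a)\in T$. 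Condition (4) is handled analogously: translating the membership $h\in F_T(j)$ back through $\tau$ and $\omega$, the conditions $T_h\supsetneq T_j$ and $(j,h)\notin T$ force $\tau^{-1}(h)\in m(v-r)\setminus m(r)$ for the corresponding $v$, yielding $(i,h)\in S_0$. The main obstacle is condition (2): reconciling the permutation $\tau$ (built from the $\mu^{(h)}$-hierarchy) with the orientation chosen by $\omega$ (also governed by the $\mu^{(h)}$) demands a careful induction on the hierarchy level, and it is here that one fully exploits the recursive definition of the sets $\mathcal{V}_i^{(h)}$ and the integers $r_i^{(h)}$.
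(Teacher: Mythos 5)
Your strategy coincides with the paper's: the same ordinante $\omega$, a permutation $\tau$ ordering the columns by the $\mu^{(h)}$-hierarchy, and a verification of the four conditions of Definition \ref{tg} for $T=\tau\omega({\mathcal T})$, $S_0=\tau\omega({\mathcal S}_0)$. As written, however, the argument has genuine gaps at exactly the points where the real work lies. First, Lemma \ref{seq} only guarantees that the sequences $(\mu_i^{(h)})$ and $(\mu_j^{(h)})$ differ when $\{i,j\}\in U'_{\mb}$; for a pair with $i\in m(j)$ (or $j\in m(i)$) the two sequences can coincide identically, so your relation $\prec$ is not a total order. Ties must be broken arbitrarily, and for the resulting $\tau$ to yield a tower set one must prove that tied columns satisfy $\overline{T}_i=\overline{T}_j$ --- this is the paper's ``Case 1'' and is itself a nontrivial argument about $m(i)$, $m(j)$ and $m(h)$.

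Second, and more seriously, in your treatment of condition (2) for the ``$U'$-part'' you argue only about the orientation that $\omega$ would assign to the pair $\{x,\tau^{-1}(a)\}$; but to conclude $(x,a)\in T$ you must first show that $\{x,\tau^{-1}(a)\}$ belongs to $U'_{\mb}$ at all, i.e.\ that $x\notin m(\tau^{-1}(a))$ and $\tau^{-1}(a)\notin m(x)$. This membership statement is the technical core of the paper's proof (it uses the inclusions $m(h)\cap m(r_h^{(s)})\subset m(i)\cap m(r_i^{(s)})\subseteq m(j)\cap m(r_j^{(s)})$ together with the equalities $r_h^{(s)}=r_i^{(s)}=r_j^{(s)}$ at the first divergence level, and the chain structure of the sets $m(\cdot)$), and it cannot be absorbed into ``the definition of $\omega$ forces\dots''. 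Finally, condition (4) is not merely ``analogous'' to (2): in the problematic subcase $u\in m(i-r)$, $i-r\notin m(u)$ the paper must invoke the connectedness of $U_{\mb}$, pick $v\in\overline{T}_u\setminus\overline{T}_j$ and rule out each of the four possible connecting pairs; nothing in your sketch produces such an argument. So the outline is the right one, but the proof is not yet there.
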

\begin{proof}
Let us consider $\overline{U}.$ It is enough to find a permutation $\tau$ on $[2r]$ such that $\tau(\overline{U})$ is a generalized tower set.
\par
We set $\overline{T}:=\omega({\mathcal T}).$ We want to show that the set $\{\overline{T}_i\}_{1\le i\le r}$ is totally ordered by inclusion. 
Let $i,j\in [r],$ $i\ne j.$
\par
Case $1:$ $\mu_i^{(k)}=\mu_j^{(k)}$ for every $k\ge 0.$ In this case we will show that $\overline{T}_i=\overline{T}_j.$ Of course it is enough to show that 
$\overline{T}_i\subseteq\overline{T}_j.$ Take $h\in\overline{T}_i$ i.e. $(h,i)\in\overline{T}.$ If $h>r$ by Lemma \ref{maggr} $(h,j)\in \overline{T}.$
If $h\le r$ let $t$ be the smallest integer such that $\mu_h^{(t)}<\mu_i^{(t)}=\mu_j^{(t)}.$ By the minimality of $t,$ $r_i^{(t)}=r_h^{(t)},$ hence
from $m(h)\cap m(r_h^{(t)})\subset m(i)\cap m(r_i^{(t)})=m(j)\cap m(r_j^{(t)})$ it follows that $j\not\in m(h).$ On the other hand if $h\in m(j),$ since   
$\mu_i^{(k)}=\mu_j^{(k)}$ for every $k\ge 0,$ by Lemma \ref{seq}, $\{i,j\}\not\in U'_{{\mathcal M}}$ i.e. either $i\in m(j)$ or $j\in m(i).$ 
If $j\in m(i)$ then $h\in m(i)$ and this contradicts that $\{h,i\}\in{\mathcal T}.$ If $i\in m(j),$ since both $h$ and $i$ belong to $m(j)$ then either 
$h\in m(i)$ or $i\in m(h),$ again a contradiction with $\{h,i\}\in{\mathcal T}.$ Therefore $h\not\in m(j).$ This together with $j\not\in m(h),$ as we saw, implies that $\{h,j\}\in{\mathcal T}.$ By the inequality $\mu_h^{(t)}<\mu_j^{(t)},$ we get $(h,j)\in\overline{T}.$
\par
Case $2:$ let $t$ be the smallest integer such that $\mu_i^{(t)}<\mu_j^{(t)}.$ We claim that $\overline{T}_i\subseteq\overline{T}_j.$ Take $h\in\overline{T}_i$ i.e. $(h,i)\in\overline{T}.$ If $h>r,$ using again Lemma \ref{maggr}, we are done. So we can assume $h\le r.$ Let $s$ be the smallest integer such that 
$\mu_h^{(s)}<\mu_i^{(s)}.$ Assume $s\le t$ (the same argument will work in the case $s>t$). We have to prove that $(h,j)\in\overline{T}.$ From the inequalities
$\mu_h^{(s)}<\mu_i^{(s)}\le\mu_j^{(s)}$ it is enough to show that $\{h,j\}\in{\mathcal T}$ i.e. $h\not\in m(j)$ and $j\not\in m(h).$ Using the same inequalities
we get 
 $$m(h)\cap m(r_h^{(s)})\subset m(i)\cap m(r_i^{(s)})\subseteq m(j)\cap m(r_j^{(s)}).$$
As above, by the minimality of $s,$ we have $r_h^{(s)}=r_i^{(s)}=r_j^{(s)},$ so we can deduce that $j\not\in m(h).$ On the other hand, if $h\in m(j),$ since there is $k\in m(j)\cap m(r_j^{(s)}),$ with $k\not\in m(h),$ we obtain that $h\in m(k)$ (note that we are using the fact that $h,k\in m(j)$ and $k\not\in m(h)$).
Since $m(k)\subseteq m(j)\cap m(r_j^{(s)}),$ we get $h\in m(r_j^{(s)}),$ so $h\in m(i),$ a contradiction, hence $h\not\in m(j)$ and we are done.
\par
Let $\tau$ be a permutation on $[2r],$ such that $\tau(i)<\tau(j)$ whenever $1\le i,j\le r$ and $\overline{T}_i\supseteq\overline{T}_j.$ Moreover $\tau(i)=i$ for $i\ge r+1.$ We denote by $S=\tau\omega(U_{\mb}).$ We want to show that $S$ is a generalized tower set i.e. we need to prove the four properties stated in Definition \ref{tg}.
\par
1) Clearly $S$ is connected by Lemma \ref{conn}. 
\par
2) Now set $T:=\tau(\overline{T})$ and $S_0=S\setminus T.$ By the properties of $\tau,$ $T$ is a tower set.
\par
3) Note that $\pi_2(S)=\pi_2(T)=[r]$ and $\pi_1(T)\subseteq [r]\cup\{r+1\le i\le 2r\mid i-r\in m(r)\}.$ Therefore if $(i,j)\in S_0,$ 
$\{\tau^{-1}(i),\tau^{-1}(j)\}\in U''_{\mb}.$ This implies that $i>r$ so $i\not\in\pi_2(T).$ On the other hand since 
$\{\tau^{-1}(i),\tau^{-1}(j)\}\not\in{\mathcal T},$ we have $i-r\not\in m(r)$ i.e. $i\not\in\pi_1(T).$ 
%Since $(i,\tau^{-1}(j))\not\in\overline{T}$ we get $j\in\pi_2(T).$
Let $u=\tau^{-1}(j).$ Then $(i,u)\not\in\overline{T},$ so $\{i,u\}\in U''_{\mb}.$ We claim that $(u,r)\in\overline{T},$ which will imply that $j\in\pi_1(T).$
We need to show that $u\not\in m(r).$ We know that $\{i,u\}\in U''_{\mb},$ so $i-r\in m(u).$ If $u\in m(r)$ then $i-r\in m(r)$ therefore $(i,j)\in T,$ a contradiction. Consequently $j\in\pi_1(T)\cap\pi_2(T).$
\par
4) Now we would like to prove that if $(i,\tau(j))\in S_0$ and $\tau(u)\in F_T(\tau(j))$ then $(i,\tau(u))\in S_0,$ i.e.  
$\{i,u\}\in{\mathcal S}_0$ hence we have to show that $i-r\in m(u)$ and $i-r\not\in m(r).$ Since $(i,\tau(j))\in S_0$ we have $i-r\in m(j)\setminus m(r).$ 
%By definition we have to prove that $i-r\in m(u).$ 
%Since $\{i, j\}\in {\mathcal S}_0$ and $j \le r< i,$  we get $$i-r\in  m(j).$$
Since  $\tau(u) \in F_T(\tau(j))$ we have $ T_{\tau(j)}\subset T_{\tau(u)}$ and $ (\tau(j),\tau(u))\not\in T.$ From this we deduce that 
$\{u,j\}\not\in U'_{\mb},$ consequently either $j\in m(u)$ or $u\in m(j).$ If $j\in m(u)$ since $i-r\in m(j)$ we get $i-r\in m(u)$ and we are done.
If $u\in m(j)$ since also $i-r\in m(j)$ we get either $u\in m(i-r)$ or $i-r\in m(u).$ 
If $u\in m(i-r)$ and $i-r\not\in m(u),$ take $v\in\overline{T}_u\setminus\overline{T}_j$ and consider $\{u,v\},$ $\{i,j\}\in U_{\mb}.$ Since $U_{\mb}$ is connected we have that one of the following sets must belong to $U_{\mb}:$
$$\{u,i\},\,\{u,j\},\,\{v,i\},\,\{v,j\}.$$
Note that $\{u,i\}\not\in U_{\mb}$ since $i-r\not\in m(u).$ Moreover $\{u,j\}\not\in U_{\mb}$ (see above). 
If $\{v,i\}\in U_{\mb},$ then $(i,v)\in\omega({\mathcal S}_0),$ so $i-r\in m(v),$ consequently $u\in m(v),$ which contradicts that $\{u,v\}\in U_{\mb}.$
\par
If $\{v,j\}\in U_{\mb},$ then $(j,v)\in\omega({\mathcal S}_0).$ Therefore $j\in\overline{T}_v\setminus\overline{T}_u$ and 
$v\in\overline{T}_u\setminus\overline{T}_v,$ a contradiction since $\overline{T}_u$ and $\overline{T}_v$ are comparable by inclusion. 
Consequently $i-r\in m(u)$ and we are done.
\end{proof}
\begin{rem} We note that  if a Hilbert-Burch matrix of standard form $\mb=(m_{i,j})$  is  bidiagonal (i.e. the only non zero entries are $m_{i,i}$ and $m_{i, i+1}$) then $U_{\mb}$ is a towerizable set. This follows by Theorem \ref{um}, since, using the same notation as above, $\sd_0=\emptyset.$ Vice versa if $U_{\mb}$ is a towerizable set then it is easy to build a bidiagonal Hilbert-Burch matrix of standard form $\mb'$ such that $I(\mb)=I(\mb').$ This generalizes a result of Ahn and Shin in \cite{AS}.
\end{rem}
In order to get our main result we need the following Lemma.

\begin{lem}\label{dept}
With the same terminology as above, let $\mb$ be a Hilbert-Burch matrix of standard form. Let $\{i, j\}, \{u,v\}$ be two different elements in $U_{\mb}.$ Then, if we set $H_i:=D_i$ and $H_{r+i}:=M_i$ for $i=1,..,r,$ 
$$\depth (H_i, H_j, H_u, H_v)\geq 3.$$
\end{lem}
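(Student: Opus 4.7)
The plan is to argue by contradiction: assume that some height-$2$ prime $P=(x_a,x_b)$ contains $(H_i,H_j,H_u,H_v)$, and derive that two monomials this forces to share a variable must in fact be coprime by the standard form of $\mb$. Since depth equals height for monomial ideals in a polynomial ring, this suffices.

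I would first record three coprimality facts that follow from Proposition~\ref{HBgen} and the squarefreeness of each maximal minor $f_k$:
\emph{(i)} $D_s$ and $D_t$ are coprime for $s\neq t$, since both divide $f_0=D_1\cdots D_r$;
\emph{(ii)} $D_s$ and $M_t$ are coprime whenever $s\notin m(t)$ (both then appear as factors of the squarefree $f_t=\prod_{k\in m(t)}M_k\cdot\prod_{k\notin m(t)}D_k$) or $s=t$ (the two nonzero entries $D_s,M_s$ of column $s$ of $\mb$ encode the minimal monomial syzygy $M_s f_{\sigma(s)}=D_s f_s$, whose coefficients are automatically coprime);
\emph{(iii)} $M_s$ and $M_t$ are coprime whenever $s,t$ both lie in some chain $m(k)$ (both divide $f_k$), and since each chain is totally ordered by iterating $\sigma$, this is equivalent to one of $s,t$ lying in the chain of the other. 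These facts also imply that $(H_i,H_j)$ is a height-$2$ complete intersection for every $\{i,j\}\in U_{\mb}$ (using that $j-r\in m(i)$ forces $j-r\le i$ in the $U''_{\mb}$-case).

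Under the contradiction hypothesis $P\supseteq(H_i,H_j,H_u,H_v)$, since both $(H_i,H_j)$ and $(H_u,H_v)$ are height-$2$ complete intersections contained in $P$, the prime $P$ is a minimal prime of each, so after possibly swapping $a\leftrightarrow b$ one has $x_a\mid H_i$, $x_b\mid H_j$, and either $x_a\mid H_u,\ x_b\mid H_v$ or $x_a\mid H_v,\ x_b\mid H_u$. I would then split into three cases according to whether each pair lies in $U'_{\mb}$ or $U''_{\mb}$. If both pairs are in $U'_{\mb}$, all four $H_\bullet$'s are $D$'s with $|\{i,j,u,v\}|\ge 3$; since two variables can cover at most two pairwise coprime monomials, \emph{(i)} gives a contradiction. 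If one pair lies in $U'_{\mb}$ and the other in $U''_{\mb}$ (with $u\le r<v$, say), \emph{(i)} forces $u\in\{i,j\}$, and then $M_{v-r}$ shares a variable with the remaining $D_\ell$; the conditions $v-r\in m(u)$ and (in the twisted sub-case) $i\notin m(j)$ put $D_\ell$ and $M_{v-r}$ both in the squarefree $f_{v-r}$, contradicting \emph{(ii)}.

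The main obstacle is the case in which both pairs lie in $U''_{\mb}$. Writing $j':=j-r$ and $v':=v-r$, the ``parallel'' branch $x_a\mid D_u$ forces $i=u$ by \emph{(i)}, so $j',v'\in m(i)$ lie in a single totally ordered chain and \emph{(iii)} contradicts $x_b\mid M_{j'}$, $x_b\mid M_{v'}$. The ``twisted'' branch $x_b\mid D_u$, $x_a\mid M_{v'}$ is more delicate, because when $v'>i$ and $i\in m(v')$ there is no single $f_k$ containing both $D_i$ and $M_{v'}$ as factors. In that sub-case the inclusions $m(i)\subseteq m(v')\subseteq m(u)$ force $j'\in m(u)$, so either $j'<u$ (and $f_{j'}$ witnesses $\gcd(D_u,M_{j'})=1$, contradicting $x_b\mid D_u, M_{j'}$) or $j'=u=i$ (in which case $H_j=M_i$ and $H_u=D_i$ are the two entries of column $i$, coprime by \emph{(ii)}, contradicting $x_b$ dividing both). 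Every branch therefore contradicts one of \emph{(i)}--\emph{(iii)}, completing the proof.
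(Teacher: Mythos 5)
Your proof is correct and follows essentially the same route as the paper's: both arguments reduce the depth bound to pairwise-coprimality statements among the $D_s$ and $M_t$, obtained from Proposition~\ref{HBgen} and the squarefreeness of the maximal minors $f_k$, and split into the same three cases according to membership in $U'_{\mb}$ and $U''_{\mb}$. Your version is in fact slightly more careful, since you explicitly verify $\gcd(D_s,M_s)=1$ via minimality of the syzygy in column $s$ --- a coprimality the paper's argument uses implicitly (e.g.\ when $i-r=j$ in its second case) without justification.
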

\begin{proof} We need to distinguish three possibilities. If $\{i, j\}, \{u,v\}\in U'_{\mb}$ then $i, j, u, v\le r,$  since  $\prod_{h=1..r} D_h$ is a minimal generator of $I(\mb)$ and $|\{i, j, u,v \}|\ge 3$ we are done. 
If $\{i, j\}, \{u,v\}\in U''_{\mb},$ say $i, u\ge r,$ then if $j=v$ we have $i-r\in m(j)$ and $u-r\in m(j).$ So, by Proposition \ref{HBgen}, $H_iH_u$ is a factor of $f_j$ hence $H_i, H_u$ are coprime as $f_j$ is squarefree, then  $\depth (H_i, H_j, H_u)=3.$     
If $j<v,$ then  $i-r\in m(j)$ and $v\notin m(j).$ So, by Proposition \ref{HBgen}, $H_iH_v$ is a factor of $f_j$ hence $H_i, H_v$ are coprime as $f_j$ is squarefree, then  $\depth (H_i, H_j, H_v)=3.$ 
If $\{i, j\}\in U'_{\mb}, \{u,v\}\in U''_{\mb},$ with $v<u,$ when $|\{i, j, v \}|= 3$ we are done, otherwise say $v=j$ then $H_uH_i$ is a factor of $f_j$ so  $\depth (H_i, H_j, H_u)=3.$
\end{proof}

Collecting all the results of this section we are ready to proof the main result.

\begin{thm}\label{gts-acm}
Let $I\subset k[x_1,\ldots,x_n]$ be a monomial squarefree of height $2.$ Then $I$ is aCM iff it defines a generalized tower scheme. 
\end{thm}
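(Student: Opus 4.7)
The plan is to split into the two directions, using the machinery already assembled in Section~3.

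For the easy direction ($\Leftarrow$), suppose $I=I_S(\ff_1,\ff_2)$ for a generalized tower set $S$ and families $\ff_1,\ff_2$ as in Definition~\ref{gts}. Theorem~\ref{gtsacm} applied to $S$ shows that $I_{\varphi(S)}$ is aCM. The genericity/depth hypotheses imposed on $\ff_1,\ff_2$ in Definition~\ref{gts} are exactly the hypotheses of Proposition~\ref{acmci}, so that proposition delivers the Cohen-Macaulayness of $I=I_S(\ff_1,\ff_2)$.

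For the nontrivial direction ($\Rightarrow$), assume $I$ is aCM monomial squarefree of height~$2$. By Lemma~\ref{mat} pick a Hilbert-Burch matrix $\mathbf{M}$ for $I$ in standard form, with monomial entries, and set $H_i:=D_i$ for $1\le i\le r$ and $H_{r+i}:=M_i$ for $1\le i\le r$. Theorem~\ref{um} asserts that $U_{\mathbf{M}}$ is generalized towerizable: there exist an ordinante $\omega$ and a permutation $\tau$ such that $S:=\tau\omega(U_{\mathbf{M}})$ is a generalized tower set. Transport the $H_{\ell}$'s along $\tau^{-1}$: for each $k\in\pi_i(S)$ set $f_{ik}:=H_{\tau^{-1}(k)}$, producing two families $\ff_1,\ff_2$. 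Lemma~\ref{dept} then gives $\depth(H_i,H_j,H_u,H_v)\ge 3$ for distinct pairs $\{i,j\},\{u,v\}\in U_{\mathbf{M}}$, which is exactly the genericity condition of Definition~\ref{gts}, so $(S,\ff_1,\ff_2)$ genuinely defines a generalized tower scheme.

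It remains to verify the identification $I=I_S(\ff_1,\ff_2)=\bigcap_{\{i,j\}\in U_{\mathbf{M}}}(H_i,H_j)$. The inclusion $I\subseteq I_S(\ff_1,\ff_2)$ follows by inspecting the generators: by Proposition~\ref{HBgen} each minor $f_k$ of $\mathbf{M}$ equals $\prod_{j\in m(k)}M_j\cdot\prod_{j\notin m(k)}D_j$, and Proposition~\ref{mdcfr} then guarantees, case-by-case on $\{i,j\}\in U'_{\mathbf{M}}$ versus $U''_{\mathbf{M}}$, that either $H_i$ or $H_j$ divides $f_k$. For the reverse inclusion, observe that $I_{U_{\mathbf{M}}}$ is aCM (Theorem~\ref{gtsacm} via the towerizability established in Theorem~\ref{um}), so by Lemma~\ref{mat} it admits a standard-form Hilbert-Burch matrix $\widehat{\mathbf{M}}$ in variables $y_1,\ldots,y_{2r}$. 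The construction of $U_{\mathbf{M}}$ from the chains $m(j)$ encoded in $\mathbf{M}$ ensures that $\widehat{\mathbf{M}}$ can be chosen with the same nonzero pattern as $\mathbf{M}$, with $y_i$ (resp.\ $y_{r+j}$) occupying the position of $D_i$ (resp.\ $M_j$). Substituting $y_{\ell}\mapsto H_{\ell}$ in $\widehat{\mathbf{M}}$ then recovers $\mathbf{M}$ itself, and Proposition~\ref{acmci} guarantees that this substituted matrix is a Hilbert-Burch matrix for $\bigcap_{\{i,j\}\in U_{\mathbf{M}}}(H_i,H_j)=I_S(\ff_1,\ff_2)$. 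Hence $I=I(\mathbf{M})=I_S(\ff_1,\ff_2)$.

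The principal obstacle is precisely the alignment step at the end: one must justify that a Hilbert-Burch matrix for the combinatorial ideal $I_{U_{\mathbf{M}}}$ can be written in standard form with the same positional pattern as $\mathbf{M}$, so that the substitution $y_{\ell}\mapsto H_{\ell}$ reproduces $\mathbf{M}$ rather than some inequivalent matrix. This rests on the very definition of $U_{\mathbf{M}}$ in terms of the chains $m(j)$ and on the formula of Proposition~\ref{HBgen}, which shows that the minors depend only on the chain pattern and on the $D_j, M_j$ entries, not on any accessory data of $\mathbf{M}$.
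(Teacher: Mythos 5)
Your $(\Leftarrow)$ direction (Theorem \ref{gtsacm} plus Proposition \ref{acmci}) and the setup of your $(\Rightarrow)$ direction (standard-form matrix via Lemma \ref{mat}, towerizability via Theorem \ref{um}, genericity via Lemma \ref{dept}, and the inclusion $I\subseteq I_S(\ff_1,\ff_2)$ via Propositions \ref{HBgen} and \ref{mdcfr}) all match the paper. The problem is the reverse inclusion $I_S(\ff_1,\ff_2)\subseteq I$, which is where the real work lies and where your argument has a genuine gap. You invoke Lemma \ref{mat} to obtain \emph{some} standard-form Hilbert--Burch matrix $\widehat{\mb}$ for the combinatorial ideal $I_{U_{\mb}}\subset k[y_1,\ldots,y_{2r}]$ and then assert that $\widehat{\mb}$ ``can be chosen with the same nonzero pattern as $\mb$,'' with $y_i$ in place of $D_i$ and $y_{r+j}$ in place of $M_j$. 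That assertion is exactly equivalent to the equality $I(\widehat{\mb})=I_{U_{\mb}}$ for the specific matrix obtained from $\mb$ by substituting variables, and the hard half of this equality, $I_{U_{\mb}}\subseteq I(\widehat{\mb})$, is precisely (a generic-variable version of) the inclusion you set out to prove. Lemma \ref{mat} gives no control over the pattern of the matrix it produces, and Proposition \ref{HBgen} only computes the minors of a matrix already known to present the ideal; neither justifies the alignment. So the argument is circular at its crucial point.

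The paper closes this gap by a direct monomial computation: given a squarefree monomial $g\in I_S(\ff_1,\ff_2)$, it sets $E_g=\{0\}\cup\{1\le h\le r\mid g\in(M_h)\text{ and }g\not\in(D_h)\}$ and $e=\max E_g$, and then verifies factor by factor that the minor $g_e=\prod_{j\in m(e)}M_j\cdot\prod_{j\not\in m(e)}D_j$ divides $g$, using membership of suitable pairs in $U'_{\mb}$ and $U''_{\mb}$ together with squarefreeness. If you want to keep your reduction-to-variables strategy, you would instead have to prove directly that $\widehat{\mb}$ presents $I_{U_{\mb}}$ --- for instance by noting that $I(\widehat{\mb})$ is a radical, unmixed, height-two monomial ideal (by Hilbert--Burch) and then showing that its minimal primes $(y_a,y_b)$ are exactly those with $\{a,b\}\in U_{\mb}$; the implication ``$(y_a,y_b)\supseteq I(\widehat{\mb})$ forces $\{a,b\}\in U_{\mb}$'' is the combinatorial content you are currently assuming. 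Only after that does Proposition \ref{acmci} legitimately transport the presentation from the $y_\ell$'s to the $H_\ell$'s.
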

\begin{proof}
Let us suppose that $I$ is aCM. Then $I=I(\mb)$ for some $\mb$ of standard form of size $(r+1)\times r$ (see Lemma \ref{mat} and Definition \ref{dfst}). By Theorem \ref{um} $U_{\mb}$ is a generalized towerizable set. Let $\omega$ and $\tau$ be as in the proof of Theorem \ref{um}. Let $S:=\tau(\omega(U_{\mb})),$ which is a generalized tower set. For $j\in\pi_1(S),$ we set
 $$f_{1j}:=\begin{cases}D_{\tau^{-1}(j)}& \text{ for }j\le r \\ M_{j-r} & \text{ for }j> r\end{cases}.$$
For $j\in\pi_2(S)=[r],$ we set $f_{2j}:=D_{\tau^{-1}(j)}.$
Also we set $$\ff_i:=\{f_{ij}\mid j\in\pi_i(S)\},\, i=1,2.$$
By Lemma \ref{dept}, $\ff_1$ and $\ff_2$ satisfy the conditions of genericity required by Definition \ref{gts}. 
%In fact, since for every $(i,j)\in S,$ 
%$(f_{1i},f_{2j})\supseteq I,$ $f_{1i},f_{2j}$ are coprime. Moreover if $(i,j),$ $(h,k)\in S,$ with $\{i,j\}\ne\{h,k\}$
\par
We claim that $I_S(\ff_1,\ff_2)=I.$ At first we show that $I\subseteq I_S(\ff_1,\ff_2).$ Indeed, let $g_k$ be the maximal minor obtained from $\mb$ by deleting the $k$-th row and take any $(i,j)\in S.$ We need to show that $g_k\in (f_{1i},f_{2j})$ for every $k.$ Since 
$g_0=\prod_{1\le i\le r} D_i$ and $f_{2j}=D_{\tau^{-1}(j)},$ $g_0\in (f_{1i},f_{2j}).$ Assume $k\ge 1.$ Since $(i,j)\in S$ we have $\{\tau^{-1}(i),\tau^{-1}(j)\}\in U_{\mb}=U'_{\mb}\cup U''_{\mb}.$ If $\{\tau^{-1}(i),\tau^{-1}(j)\}\in U'_{\mb}$ then $(f_{1i},f_{2j})=(D_{\tau^{-1}(i)},D_{\tau^{-1}(j)})$ and by Proposition \ref{mdcfr} we get either $\tau^{-1}(i)\not\in m(k)$ or $\tau^{-1}(j)\not\in m(k)$ therefore by Proposition \ref{HBgen}, $g_k\in (f_{1i},f_{2j}).$ We proceed analogously if $\{\tau^{-1}(i),\tau^{-1}(j)\}\in U''_{\mb}.$
\par
Now we show that $I_S(\ff_1,\ff_2)\subseteq I.$ Let $g\in I_S(\ff_1,\ff_2)$ be a squarefree monomial. Let
 $$E_g=\{0\}\cup\{1\le h\le r \mid g\in (M_h) \text{ and }g\not\in (D_h)\}.$$
We set $e:=\max E_g.$ We claim that $g\in (g_e).$
\par
Assume $e=0$ and let $h\in [r].$ If $g\in (M_h)$ then $g\in (D_h)$ by the maximality of $e.$ 
If $g\not\in (M_h)$ then $g\in (D_h)$ since $\{r+h,h\}\in U''_{\mb.}$
\par
Assume $e>0.$ Remind that $g_e=\prod_{j\in m(e)} M_j \cdot \prod_{j\not\in m(e)} D_j.$ If $g\in (M_h)$ then $g\in (D_h)$ for $h>e$ by the maximality of $e.$ 
If $g\not\in (M_h)$ then $g\in (D_h)$ since $\{r+h,h\}\in U''_{\mb.}$
Note that if $e=1$ we are done. So we can assume $e>1.$ If $h\not\in m(e)$ and $h<e$ then $\{e,h\}\in U'_{\mb},$ consequently $g\in (D_e,D_h),$ but 
$g\not\in (D_e)$ so $g\in (D_h).$ Now let $h\in m(e),$ 
namely $\{r+h,e\}\in U''_{\mb.}$ Therefore $g\in (M_h,D_e),$ but $g\not\in (D_e)$ so $g\in (M_h).$ Since $g_e$ is a squarefree monomial, 
we showed that $g\in (g_e).$
\par
Vice versa let us suppose that $I$ defines a generalized tower scheme. This means that $I=I_S(\ff_1,\ff_2)$ where $S$ is a generalized tower set and $\ff_1,\ff_2$ are families of monomials, satisfying the conditions of Definition \ref{gts}. By Theorem \ref{gtsacm}, $S$ is aCM. So we get that $I$ is aCM just applying Proposition \ref{acmci}.

\end{proof}

\vspace{1cm}
%\newpage
%\markboth{}{}
{\f
{\sc (G. Favacchio) Dip. di Matematica e Informatica, Universit\`a di Catania,\\
                  Viale A. Doria 6, 95125 Catania, Italy}\par
{\it E-mail address: }{\tt favacchio@dmi.unict.it} \par
{\it Fax number: }{\f +39095330094} \par
\vspace{.3cm}
{\f
{\sc (A. Ragusa) Dip. di Matematica e Informatica, Universit\`a di Catania,\\
                  Viale A. Doria 6, 95125 Catania, Italy}\par
{\it E-mail address: }{\tt ragusa@dmi.unict.it} \par
{\it Fax number: }{\f +39095330094} \par
\vspace{.3cm}
{\sc (G. Zappal\`a) Dip. di Matematica e Informatica, Universit\`a di Catania,\\
                  Viale A. Doria 6, 95125 Catania, Italy}\par
{\it E-mail address: }{\tt zappalag@dmi.unict.it} \par
{\it Fax number: }{\f +39095330094}
}

\end{document}